\newtheorem{thm}{Theorem}[section]
 \newtheorem{cor}[thm]{Corollary}
 \newtheorem{lem}[thm]{Lemma}
 \newtheorem{prop}[thm]{Proposition}
 \theoremstyle{definition}
 \theoremstyle{remark}
 \newtheorem{rem}[thm]{Remark}
 \numberwithin{equation}{section}
\def\be#1 {\begin{equation} \label{#1}}
\def\ee{\end{equation}}
\def\sqw{\hbox{\rlap{\leavevmode\raise.3ex\hbox{$\sqcap$}}$%
\sqcup$}}
\def\findem{\ifmmode\sqw\else{\ifhmode\unskip\fi\nobreak\hfil
\penalty50\hskip1em\null\nobreak\hfil\sqw
\parfillskip=0pt\finalhyphendemerits=0\endgraf}\fi}
\newcommand{\vecs}[2]{\vec{#2}^{(#1)}}
\newcommand{\R}{\mathbb R}
\newcommand{\Q}{\mathbb Q}
\newcommand{\N}{\mathbb N}
\newcommand{\Z}{\mathbb Z}
\title{Bounds for spectral projectors on tori}
\begin{document}

\begin{abstract}We investigate norms of spectral projectors on thin spherical shells for the Laplacian on tori. This is closely related to the boundedness of resolvents of the Laplacian, and to the boundedness of $L^p$ norms of eigenfunctions of the Laplacian. We formulate a conjecture, and partially prove it. \end{abstract}

\author[P. Germain and S. L. Rydin Myerson]{Pierre Germain and Simon L. Rydin Myerson}

\maketitle

\tableofcontents

\section{Introduction}

\subsection{Boundedness of spectral projectors on Riemannian manifolds}

\subsubsection{A general problem}
Given a Riemannian manifold $M$ with Laplace-Beltrami operator $\Delta$, and for $\lambda \geq 1$, $0<\delta <1$, let
$$
P_{\lambda,\delta} = P_{\lambda,\delta}^{\chi} = \chi \left( \frac{\sqrt{-\Delta} - \lambda}{\delta} \right).
$$
where $\chi$ is a non-negative cutoff function supported in $[-1,1]$, equal to $1$ on $[-\frac{1}{2},\frac{1}{2}]$.

A general question is to estimate
$$
{ \| P_{\lambda,\delta}^\chi \|_{L^2 \to L^p}, \qquad \mbox{where $p \in [2,\infty]$}}.
$$

Using self-adjointness of $P_{\lambda,\delta}^\chi$ and a $TT^*$ argument, it follows that
\begin{equation}
\label{mallard}
\| P^{\chi^2}_{\lambda,\delta} \|_{L^{p'} \to L^p} = \| P_{\lambda,\delta}^\chi \|_{L^{p'} \to L^2}^2 = \|  P_{\lambda,\delta}^\chi \|_{L^{2} \to L^p}^2.
\end{equation}
Furthermore, given two cutoff functions $\chi$ and $\widetilde{\chi}$, the boundedness of $P_{\lambda,\delta}^{\chi}$ on $L^2$ implies the following: if $\| P_{\lambda,\delta}^{\chi} \|_{L^2 \to L^p}$ obeys, say, a polynomial bound of the type $\lambda^\alpha \delta^\beta$, so does $\| P_{\lambda,\delta}^{\widetilde \chi} \|_{L^2 \to L^p}$, with a different constant. Therefore, it will be equivalent to estimate either of the three quantities appearing in~\eqref{mallard}, and the result is essentially independent of the cutoff function, which might even be taken to be a sharp cutoff.

Up to possibly logarithmic factors, this question is essentially equivalent to that of estimating the $L^2 \to L^p$ norm of the resolvent $R((x+iy)^2) = (\Delta + (x+iy)^2)^{-1}$; this is the point of view taken in  Dos Santos Ferreira-Kenig-Salo~\cite{DKS} and Bourgain-Shao-Sogge-Yao \cite{BSSY}. Essentially, one can think of $R((x+iy)^2)$ as a variant of $\frac{1}{xy} P_{x,y}$.

\subsubsection{The case of Euclidean space} \label{sec:euclidean-space}

We will denote the Stein-Tomas exponent
$$
p_{ST} = \frac{2(d+1)}{d-1}.
$$
As will become clear, it often plays the role of a critical point when estimating the norm of $P_{\lambda,\delta}$.

On $\mathbb{R}^d$ (with the Euclidean metric), there holds
\begin{equation}
\label{swallow}
\| P_{\lambda,\delta} \|_{L^2 \to L^p} \lesssim
\left\{
\begin{array}{ll}
\lambda^{\sigma(p)/2} \delta^{1/2} & \mbox{if $p \geq p_{ST}$} \\
\lambda^{\frac{d-1}{2} \left( \frac{1}{2} - \frac{1}{p} \right)} \delta^{\frac{(d+1)}{2}\left( \frac{1}{2} - \frac{1}{p} \right)} & \mbox{if $2 \leq p \leq p_{ST}$},
\end{array}
\right.
\end{equation}
where 
$$
\sigma(p) = d - 1 - \frac{2d}{p} \qquad \mbox{so that} \qquad \sigma(p_{ST}) = \frac{d-1}{d+1};
$$
see the appendix for a proof of the above bounds. For more general second order operators in the resolvent formulation, we refer to Kenig-Ruiz-Sogge~\cite{KRS}. Finally, the case of the hyperbolic space was recently treated by the first author and L\'eger~\cite{GL}.

\subsubsection{The case of a compact manifold} On a compact manifold of dimension $d$, as was proved by Sogge~\cite{Sogge},
$$
\| P_{\lambda,1} \|_{L^2 \to L^p} \lesssim
\left\{
\begin{array}{ll}
\lambda^{\sigma(p)/2} & \mbox{if $p \geq p_{ST}$} \\
\lambda^{\frac{d-1}{2} \left( \frac{1}{2} - \frac{1}{p} \right)} & \mbox{if $2 \leq p \leq p_{ST}$},
\end{array}
\right.
$$
where $\sigma(p)$ is as above.
For any given compact manifold, this estimate is optimal for $\delta = 1$. In the case of the sphere $ \mathbb{S}^d$ (or more generally of a Zoll manifold), it does not improve if $\delta$ decreases, since the eigenvalues of the sphere Laplacian are essentially distributed like squared integers. However, for ``most" manifolds, the estimates above are expected to improve as $\delta$ decreases. It is the aim of this article to examine this question in the case of the torus.

If the manifold $M$ is negatively curved, then logarithmic improvements are possible over the allowed range of $\delta$, as in Bourgain-Shao-Sogge-Yao \cite{BSSY} and Blair-Sogge~\cite{BS}. The work of Sogge-Toth-Zelditch~\cite{STZ} shows that generic manifolds also allow improvements.

\subsection{Spectral projectors on tori}

\subsubsection{Formulating the problem}

From now on, we focus on the case of tori given by the quotient $\mathbb{R}^d / (\mathbb{Z} e_1 + \dots + \mathbb{Z} e_1)$, where $e_1,\dots,e_d$ is a basis of $\mathbb{R}^d$, with the standard metric. This is equivalent to considering the operators
$$
P_{\lambda,\delta} = \chi \left( \frac{\sqrt{-Q(\nabla)} - \lambda}{\delta} \right) \qquad \mbox{on} \;\; \mathbb{T}^d = \mathbb{R}^d / \mathbb{Z}^d,
$$
where $\nabla$ is the standard gradient operator, and $Q$ is a positive definite quadratic form on $\mathbb{R}^d$, with coefficients $\beta_{ij}$: 
$$
Q(x) = \sum_{i=1}^d \beta_{ij} x^i x^j \qquad \implies \qquad Q(\nabla) = - \sum_{i=1}^d \beta_{ij} \partial_i \partial_j.
$$
Dispensing with factors of $2\pi$, which can be absorbed in $Q$, the associated Fourier multiplier has the symbol
$$
\chi \left( \frac{\sqrt{Q(k)} - \lambda}{\delta} \right).
$$

\subsubsection{Known results for $p = \infty$: counting lattice points}
\label{subsec1}
Abusing notations by writing $P_{\lambda,\delta}(z)$ for the convolution kernel giving $P_{\lambda,\delta}$, we have the formula
$$
P_{\lambda,\delta}(z) = \sum_n \chi \left( \frac{\sqrt{Q(n)} - \lambda}{\delta} \right) e^{2\pi i n \cdot z}.
$$
It is easy to see that 
$$
\| P_{\lambda,\delta} \|_{L^1 \to L^\infty} = \| P_{\lambda,\delta}(z) \|_{L^\infty_{z}} = \sum_n \chi \left( \frac{\sqrt{Q(n)} - \lambda}{\delta} \right).
$$
If $\chi = \mathbf{1}_{[-1,1]}$, this can be expressed as
$$
\| P_{\lambda,\delta} \|_{L^1 \to L^\infty}  = N(\lambda + \delta) - N(\lambda - \delta),
$$
where $N(\lambda)$ is the counting function associated to the quadratic form $Q$: namely, it denotes the number of lattice points $n \in \mathbb{Z}^d$ such that  $Q(n) < \lambda^2$. To leading order, $N(\lambda)$ equals $\operatorname{Vol}(B_1) \lambda^d$, where $\operatorname{Vol}(B_1)$ is the volume of the ellipsoid $\{Q(x) < 1\}.$ We denote the error term by $P(\lambda)$, thus:
$$
N(\lambda) = \operatorname{Vol}(B_1) \lambda^d + P(\lambda).
$$

For a general quadratic form $Q$, it was showed by Landau~\cite{Landau15} that $P(\lambda) = O( \lambda^{d - \frac{2d}{d+1}})$. \textcolor{black} {Consequently, we have
\begin{align}\label{eqn:landau_shells}
\| P_{\lambda,\delta} \|_{L^1 \to L^\infty} &\ll \delta\lambda^{d-1}
&\text{for }\delta&> \lambda^{-\frac{d-1}{d+1}}. 
\end{align}}
Landau's result, and hence the range for \(\delta\) in \eqref{eqn:landau_shells}, has been improved for every dimenson \(d\). Nonetheless \eqref{eqn:landau_shells} is a useful point of comparison since our approach is in a sense a refinement of a proof of Landau's theorem (see the comments after Theorem~\ref{thmcaps}).
Regarding lower bounds for \(P\), when \(Q(x)=|x|_2^2\) one can show that \(P(\lambda_i)\gg \lambda_i^{d-2}\) for some sequence \(\lambda_i \to \infty\). The present state of the art is as follows:
\begin{itemize}
	\item If \(d=2\) then estimating $P(\lambda)$ is a variation on the celebrated Gauss circle problem. One conjectures \(P(\lambda)=O_\epsilon(\lambda^{\frac{1}{2}+\epsilon})\), and the best known result is  \(O(\lambda^{\frac{131}{208}}\log^{\frac{18627}{8320}}\lambda)\), see Huxley~\cite{Huxley03}.
	\item If \(d=3\) then one conjectures \(P(\lambda)=O_\epsilon(\lambda^{1+\epsilon})\), see  Nowak~\cite[\S\S1.1-1.2]{Nowak2014}. We have \(O(\lambda^{\frac{231}{158}})\) by Guo \cite{Guo2012}. If moreover \(Q\) has rational coefficients then \(P(\lambda)=O(\lambda^{\frac{21}{16}})\) by Chamizo-Cristob\'al-Ubis~\cite{CCU}. 
	\item If \(d=4\) then \(P(\lambda)= O(\lambda^2 \log^{\frac{2}{3}}\lambda)\) by Walfisz~\cite{walfisz1960}. The case \(Q(x)=|x|_2^2\) shows that up the log power this is best possible.
	\item If \(d>4\) then $P(\lambda) = O( \lambda^{d - 2})$, see Kr\"atzel \cite{Kraetzel00}. This is best possible  if \(Q\) is  a multiple of a form with rational coefficients, and if not then $P(\lambda) = o( \lambda^{d - 2})$ by G\"otze~\cite{Goetze}.
\end{itemize}

\subsubsection{Known results on standard tori: eigenfunctions of the Laplacian}
\label{subsec2} It was conjectured by Bourgain~\cite{Bourgain} that an eigenfunction $f$ of the Laplacian on the standard torus with eigenvalue $\lambda^2$ satisfies
$$
\| f \|_{L^p} \lesssim_\epsilon \lambda^{\frac{d-2}{2} - \frac{d}{p} + \epsilon} \| f \|_{L^2} \qquad \mbox{for $p \geq p^*$}, \;\;\;\mbox{where} \; p^* = \frac{2d}{d-2},
$$
which can be reformulated as
$$
\| P_{\lambda,\frac{1}{\lambda}} \|_{L^2 \to L^p} \lesssim_\epsilon \lambda^{\frac{d-2}{2} - \frac{d}{p} + \epsilon} \qquad \mbox{for $p \geq p^*$.}
$$
Progress towards this conjecture~\cite{Bourgain2, BourgainDemeter1, BourgainDemeter2} culminated in the work of Bourgain and Demeter on $\ell^2$-decoupling~\cite{BourgainDemeter3}, where the above conjecture is proved for $d \geq 4$ and $p \geq \frac{2(d-1)}{d-3}$.

\subsubsection{Known results on standard tori: uniform resolvent bounds} 
\label{subsec3} It was proved in Dos Santos Ferreira-Kenig-Salo~\cite{DKS}  that, for general compact manifolds, each $x,y \in \mathbb{R}$, and writing $ p^* = \frac{2d}{d-2}$, we have
$$
\| (\Delta + (x+iy)^2)^{-1} \|_{L^{(p^*)'} \to L^{p^*}} \lesssim 1 \qquad \mbox{if $|y| \geq 1$}.
$$
In terms of spectral projectors, this is equivalent (see Cuenin~\cite{Cuenin}) to the bound
$$
\| P_{\lambda,\delta} \|_{L^{(p^*)'} \to L^{p^*}}  \lesssim \lambda \delta, \qquad \mbox{if $|\delta| > 1$}.
$$

It was also asked whether this bound could be extended to a broader range of $y$, or equivalently a broader range of \(\delta\). Bourgain-Shao-Sogge-Yao \cite{BSSY} showed that on the sphere the range above is optimal. In the case of the standard \(d\)-dimensional torus \(\R^d/\Z^d\), they could improve earlier results of Shen~\cite{Shen}. The results of Shen and  Bourgain-Shao-Sogge-Yao were then sharpened by Hickman~\cite{Hickman}, who extended the range for the standard \(d\)-dimensional torus futher to  $|\delta| > \lambda^{-\frac{1}{3} -\frac{d}{3(21d^2-d-24)}  + \epsilon}$. 

\subsubsection{Known results in dimension 2} The classical estimate of Zygmund corresponds, in our language, to a sharp result for $d=2$, $p=4$, $\delta = \lambda^{-1}$. It was showed by Bourgain-Burq-Zworski~\cite{BBZ} that it can be extended to $\delta > \lambda^{-1}$. 
A striking feature of the estimates in~\cite{BBZ} is that they entail no subpolynomial loss ($\epsilon$ power in the exponent), which has important consequences for control theory in particular, as explained in that paper.

\subsection{Conjecture and results}

Based on two specific examples, developed in Section~\ref{lbac}, we conjecture that
\begin{equation}
\label{conj}
\boxed{
\| P_{\lambda,\delta} \|_{L^{2} \to L^p} \lesssim (\lambda \delta)^{\frac{(d-1)}{2} \left( \frac{1}{2} - \frac{1}{p} \right)} + \lambda^{\sigma(p)/2} \delta^{1/2},}
\end{equation}
where $\delta > \lambda^{-1}$, and $\sigma(p) = d - 1 - \frac{2d}{p}$, for any fixed torus. We show there that this bound would be optimal, and describe when each term in the conjecture dominates.

\bigskip

The methods developed in the present paper give improvements on the range of validity of this conjecture. The precise statemenet is Theorem~\ref{thm:main} below. \textcolor{black}{As this is a rather cumbersome formula, we choose to state some simpler results in fairly natural cases of interest, namely $p<p_{ST}$, \(\delta\) large,  and $d=3$. Here we have $p_{ST} = \frac{2(d+1)}{d-1}$; in sections~\ref{subsec2} and~\ref{subsec3} we saw that  $p^* = \frac{2d}{d-2}$ has some special significance, so we will also state a result in this case.}

\begin{thm}[The case $p<p_{ST}$] \label{thmpST} 
For any positive definite quadratic form $Q$, the conjecture~\eqref{conj} is verified, up to subpolynomial losses, if $\lambda>1$, \(\delta \geq \lambda^{-1}\) and $1<p<p_{ST}$. 
\end{thm}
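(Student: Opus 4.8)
The plan is to reduce, by interpolation, to the single endpoint exponent $p=p_{ST}$, and then to handle that endpoint by combining $\ell^2$-decoupling with a crude count of lattice points in the caps of the frequency shell. First I would dispose of the range $1<p\le 2$: there $\|P_{\lambda,\delta}\|_{L^2\to L^p}\le\|P_{\lambda,\delta}\|_{L^2\to L^2}\lesssim 1$ (since $\mathbb{T}^d$ has unit volume), while the right-hand side of \eqref{conj} is $\gtrsim 1$. For $2\le p\le p_{ST}$ the first term of \eqref{conj} dominates: the ratio of the two terms equals
\[
\frac{(\lambda\delta)^{\frac{d-1}{2}(\frac12-\frac1p)}}{\lambda^{\sigma(p)/2}\delta^{1/2}}=\lambda^{\frac{d+1}{2p}-\frac{d-1}{4}}\,\delta^{\frac{d-1}{2}(\frac12-\frac1p)-\frac12},
\]
and for $p\le p_{ST}$ the exponent of $\lambda$ here is $\ge 0$ while the exponent of $\delta$ is $<0$, so (as $\lambda\ge 1$, $\delta<1$) the ratio is $\ge 1$. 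Hence it suffices to prove $\|P_{\lambda,\delta}\|_{L^2\to L^p}\lesssim_\epsilon\lambda^\epsilon(\lambda\delta)^{\frac{d-1}{2}(\frac12-\frac1p)}$; and since $\frac12-\frac1{p_{ST}}=\frac1{d+1}$, Riesz--Thorin interpolation of the case $p=p_{ST}$ against the trivial bound $\|P_{\lambda,\delta}\|_{L^2\to L^2}\lesssim 1$ yields this for every $p\in[2,p_{ST}]$. So everything reduces to the endpoint bound $\|P_{\lambda,\delta}\|_{L^2\to L^{p_{ST}}}\lesssim_\epsilon\lambda^\epsilon(\lambda\delta)^{\frac{d-1}{2(d+1)}}$.

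For the endpoint I would partition the shell $\{k:|\sqrt{Q(k)}-\lambda|\lesssim\delta\}$ into finitely overlapping caps $\tau$ of diameter $\sqrt{\lambda\delta}$ — the scale at which the curvature of $\{Q=\lambda^2\}$ causes a deviation of only $(\sqrt{\lambda\delta})^2/\lambda=\delta$ from the tangent hyperplane, so that over each cap the shell lies inside a convex box $B_\tau$ with $d-1$ side lengths $\asymp\sqrt{\lambda\delta}$ and one $\asymp\delta$. Since $\delta\ge\lambda^{-1}$ we have $\sqrt{\lambda\delta}\ge 1>\delta$, and such a box is lattice-point poor: from $\#(K\cap\Z^d)\le\operatorname{vol}(K+[-\frac12,\frac12]^d)$ one gets $\#(B_\tau\cap\Z^d)\lesssim(\lambda\delta)^{(d-1)/2}$, uniformly in $\tau$. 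Writing $f_\tau$ for the Fourier restriction of $f$ to $B_\tau$, interpolating the $L^2$ identity against the Cauchy--Schwarz bound $\|f_\tau\|_{L^\infty}\le\#(B_\tau\cap\Z^d)^{1/2}\|f_\tau\|_{L^2}$ gives the crude per-cap estimate $\|f_\tau\|_{L^{p_{ST}}}\le\#(B_\tau\cap\Z^d)^{1/(d+1)}\|f_\tau\|_{L^2}$. Finally I would invoke the Bourgain--Demeter $\ell^2$-decoupling theorem for the ellipsoid $\{Q=\lambda^2\}$ — valid up to and including $p=p_{ST}$ with only a subpolynomial loss — transferred from $\R^d$ to $\mathbb{T}^d$ in the standard way (localise to the dual scale $\lambda/\delta\le\lambda^2$ and periodise), to obtain $\|P_{\lambda,\delta}f\|_{L^{p_{ST}}(\mathbb{T}^d)}\lesssim_\epsilon\lambda^\epsilon(\sum_\tau\|f_\tau\|_{L^{p_{ST}}(\mathbb{T}^d)}^2)^{1/2}$. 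Substituting the per-cap estimate, pulling out $\max_\tau\#(B_\tau\cap\Z^d)^{1/(d+1)}\lesssim(\lambda\delta)^{(d-1)/(2(d+1))}$, and using $\sum_\tau\|f_\tau\|_{L^2}^2=\|P_{\lambda,\delta}f\|_{L^2}^2$ finishes the endpoint bound.

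The step I expect to demand the most care is the transfer of decoupling to the periodic setting at these precise scales: one must check that $f_\tau$ genuinely corresponds, after localisation to scale $\lambda/\delta$ and periodisation, to the piece of the extended function supported on the matching $\sqrt{\lambda\delta}$-cap of $\{Q=\lambda^2\}$, and that the $(\lambda/\delta)^\epsilon$ loss is absorbed into $\lambda^\epsilon$ via $\delta\ge\lambda^{-1}$. The remaining ingredients — the exponent arithmetic, the convex-body lattice count, and the trivial per-cap $L^2\to L^{p_{ST}}$ bound — are routine; this cap decomposition together with the volume bound is the refinement of the classical lattice-point argument alluded to after Theorem~\ref{thmcaps}. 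I note finally that the argument in fact delivers the endpoint $p=p_{ST}$ itself, so the restriction $p<p_{ST}$ in the statement is only inherited from the more precise Theorem~\ref{thm:main}.
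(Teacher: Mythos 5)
Your argument is correct and is essentially the paper's: the paper proves the endpoint bound $\| P_{\lambda,\delta} \|_{L^2 \to L^{p_{ST}}} \lesssim_\epsilon \lambda^\epsilon (\lambda\delta)^{\frac{d-1}{2(d+1)}}$ by summing Proposition~\ref{propj} over $j$ and interpolating with the trivial $L^2 \to L^2$ bound, and Proposition~\ref{propj} is itself exactly your combination of Bourgain--Demeter decoupling into $\sqrt{\lambda\delta}$-caps (transferred to the torus via a localization at scale $\lambda/\delta$, as in its proof) with a trivial per-cap bound; your only deviation is to use the uniform count $\#(B_\tau\cap\Z^d)\lesssim(\lambda\delta)^{(d-1)/2}$ in place of the dyadic classes $\mathcal{C}_j$, which loses nothing at $p=p_{ST}$ since the paper's sum over $j$ is dominated by the top range $2^j\sim\delta^{-1}$. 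One caveat: your dismissal of $1<p\le 2$ by claiming the right-hand side of \eqref{conj} is $\gtrsim 1$ is not literally true --- for $p<2$ and, say, $\delta\sim 1$ both terms are negative powers of $\lambda$, while the operator norm is $\gtrsim 1$ as soon as the shell contains a frequency --- so the conjectured inequality cannot hold verbatim for $p<2$; note, however, that the paper's own proof likewise only establishes the range $2\le p\le p_{ST}$, so this is an issue with the statement's phrasing rather than with the substance of your argument.
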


Here, subpolynomial losses means that the conjecture holds true with an additional $\lambda^\epsilon$ factor on the right-hand side, where the implicit constant depends on $\epsilon$, but $\epsilon$ can be chosen arbitrarily small.

\textcolor{black}{
\begin{thm}[The case of large $\delta$] \label{thmsimple} 
	For any positive definite quadratic form $Q$, the conjecture~\eqref{conj} is verified, up to subpolynomial losses, if $\lambda>1$, $p\geq p_{ST}$ and
	\[
	\delta > 
	\lambda^{-\frac{(d-1)p-dp_{ST}+2}{(d+1)p-dp_{ST}-2}}
	.\]
\end{thm}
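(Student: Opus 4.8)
Write $g(p)=\frac{(d-1)p-dp_{ST}+2}{(d+1)p-dp_{ST}-2}$, so that the claim is \eqref{conj} for $p\geq p_{ST}$ and $\delta>\lambda^{-g(p)}$; note $g(p_{ST})=-1$ and $g(p)\to\frac{d-1}{d+1}$ as $p\to\infty$. By the first equality in \eqref{mallard} the assertion is equivalent to $\|P^{\chi^2}_{\lambda,\delta}\|_{L^{p'}\to L^p}\lesssim_\epsilon\lambda^\epsilon\bigl((\lambda\delta)^{(d-1)(\frac12-\frac1p)}+\lambda^{\sigma(p)}\delta\bigr)$, and throughout the range $\delta>\lambda^{-g(p)}$ with $p\geq p_{ST}$ (where $\frac{d-1}{2}(\frac12-\frac1p)\leq\frac{\sigma(p)}{2}$ and only $\delta\lesssim\lambda$ is of interest) the second term is the larger one, so effectively one wants $\|P_{\lambda,\delta}\|_{L^2\to L^p}\lesssim_\epsilon\lambda^\epsilon\lambda^{\sigma(p)/2}\delta^{1/2}$.

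The exponents $\sigma(p)/2$ on $\lambda$ and $\tfrac12$ on $\delta$ are exactly what Riesz--Thorin interpolation, with $L^2$ fixed as the domain, produces from two endpoints. At $p=\infty$, by \eqref{mallard} and the kernel formula for $P_{\lambda,\delta}$ one has $\|P_{\lambda,\delta}\|_{L^2\to L^\infty}^2=\#\{n\in\Z^d:\ |\sqrt{Q(n)}-\lambda|\lesssim\delta\}$, which by Landau's bound on the lattice remainder (cf.\ \eqref{eqn:landau_shells}) is $\lesssim\delta\lambda^{d-1}$ once $\delta\gtrsim\lambda^{-\frac{d-1}{d+1}}$, i.e.\ $\|P_{\lambda,\delta}\|_{L^2\to L^\infty}\lesssim\lambda^{\sigma(\infty)/2}\delta^{1/2}$. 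At $p=p_{ST}$, $\ell^2$-decoupling \cite{BourgainDemeter3}, rescaled to a shell of radius $\lambda$ and width $\delta$ (as in the proof of Theorem~\ref{thmpST}), gives $\|P_{\lambda,\delta}\|_{L^2\to L^{p_{ST}}}\lesssim_\epsilon\lambda^\epsilon\lambda^{\sigma(p_{ST})/2}\delta^{1/2}$ for $\delta\gtrsim1$. Interpolating these, with $\frac1p=\frac{1-\theta}{p_{ST}}$, carries $\sigma(p_{ST})/2$ and $\sigma(\infty)/2$ to $\sigma(p)/2$ and keeps the $\delta$-power at $\tfrac12$; done in this black-box way it already proves \eqref{conj} for $p\geq p_{ST}$ and $\delta\gtrsim1$.

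The content of the theorem is that for $p$ not too close to $p_{ST}$ one can push $\delta$ below $1$, down to $\lambda^{-g(p)}$ — into the regime where the whole-shell lattice count at the $p=\infty$ endpoint is dominated by Landau's error term $\lambda^{d(d-1)/(d+1)}$ and is too crude. There one replaces it by the cap-by-cap counts that underlie Theorem~\ref{thm:main}: the sphere $\{Q(x)=\lambda^2\}$ is decomposed into caps, $P_{\lambda,\delta}$ is controlled through the contributions of the individual caps, and these refined counts are fed into the interpolation above. Concretely I would apply Theorem~\ref{thm:main} with $p\geq p_{ST}$ and simplify its conclusion; in this range it should collapse to $\lambda^{\sigma(p)/2}\delta^{1/2}$, with the threshold $\lambda^{-g(p)}$ coming out precisely as the value of $\delta$ at which the residual error-type term ceases to dominate — the identities $g(p_{ST})=-1$ and $g(p)\to\frac{d-1}{d+1}$ being the Bernstein ($\delta\sim\lambda$) and Landau ($\delta\sim\lambda^{-\frac{d-1}{d+1}}$) consistency checks at the two ends of the range. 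I expect the main obstacle to be exactly this simplification: checking that each term delivered by Theorem~\ref{thm:main}, once $p\geq p_{ST}$ is imposed, is $\lesssim(\lambda\delta)^{\frac{d-1}{2}(\frac12-\frac1p)}+\lambda^{\sigma(p)/2}\delta^{1/2}$ throughout $\delta>\lambda^{-g(p)}$, which reduces to a short system of inequalities among the exponents of $\lambda$ and $\delta$ and identifying the binding one.
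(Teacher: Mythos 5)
Your plan is, in outline, the same as the paper's: the theorem is deduced by specializing Theorem~\ref{thm:main} to $p\geq p_{ST}$ and checking that its conclusion collapses to $\lambda^{\sigma(p)/2}\delta^{1/2}$ in the stated range (your preliminary black-box interpolation between the Landau count at $p=\infty$ and decoupling at $p=p_{ST}$ is correct but only covers $\delta\gtrsim 1$, a small piece of the claimed range, and is not how the endgame is run). The problem is that you stop exactly where the proof begins: everything you label ``the main obstacle'' is the entire content of the argument, and none of it is carried out. Concretely, three verifications are missing. (i) You must check that the assumed range implies $\delta\geq\lambda^{-\frac{d-1}{d+1}}$, since that is a standing hypothesis of Theorem~\ref{thm:main}; the paper does this by noting the threshold exponent is $\frac{1-d\beta(p)/\alpha(p)}{1+d\beta(p)/\alpha(p)}$ with $\beta(p)/\alpha(p)<1$ and $x\mapsto\frac{1-dx}{1+dx}$ decreasing, so it exceeds $\frac{1-d}{1+d}$. (ii) You must identify the assumed range $\delta>\lambda^{-\frac{(d-1)p-dp_{ST}+2}{(d+1)p-dp_{ST}-2}}$ as exactly the condition $\delta>\lambda^{\frac{\alpha(p)-d\beta(p)}{\alpha(p)+d\beta(p)}}$ under which Theorem~\ref{thm:main} allows one to \emph{omit the exponential term}; your description of the threshold as ``the value of $\delta$ at which the residual error-type term ceases to dominate'' misattributes its role. (iii) The term that remains for \emph{all} admissible $\delta$, namely $(\lambda\delta)^{\frac d4\alpha(p)}(\lambda/\delta)^{-\frac14\alpha(p)}\delta^{-\frac d2\beta(p)}$, still has to be shown to be $\leq\lambda^{\frac{d-1}{2}-\frac dp}\delta^{1/2}$; after collecting exponents this is equivalent to $(\lambda\delta^{\frac{d+1}{d-1}})^{-\frac{d-1}{4}\beta(p)}\leq 1$, which holds because $\delta\geq\lambda^{-\frac{d-1}{d+1}}$ and $\beta(p)\geq 0$ -- but this computation, which is where the specific algebraic form of the threshold and of $\sigma(p)$ actually gets used, appears nowhere in your write-up.

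A smaller point: your opening reduction asserts that the second term of \eqref{conj} dominates the first throughout the range, with only a parenthetical justification; as stated this is not obvious (at $p=p_{ST}$ the first term dominates for all $\delta<1$, and one must use that the theorem's range is vacuous there). It does no harm, since one ends up proving the stronger single-term bound anyway, but it should either be proved or dropped. As it stands the proposal is a correct road map that matches the paper's route, not a proof.
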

}
By substituting \(p=2d/(d-2)\) and performing a brief computation we obtain:
\begin{cor}[The case $p = p^*$] \label{thmp*} \textcolor{black}{Let \(p^*=\frac{2d}{d-2}\).} For any positive definite quadratic form $Q$, the conjecture~\eqref{conj} is verified, up to subpolynomial losses, for $p=p^*$, if $\lambda>1$ and \textcolor{black}{\(\delta \geq \lambda^{-\frac{1}{2 d - 1}}\)}.\end{cor}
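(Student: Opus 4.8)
The plan is to read off Corollary~\ref{thmp*} directly from Theorem~\ref{thmsimple}; the entire content is an algebraic simplification of the exponent appearing there once $p$ is specialized to $p^*=\frac{2d}{d-2}$.

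First I would check that $p^*$ lies in the range $p\geq p_{ST}$ covered by Theorem~\ref{thmsimple}. With $p_{ST}=\frac{2(d+1)}{d-1}$, the inequality $\frac{2d}{d-2}\geq\frac{2(d+1)}{d-1}$ is, after cross-multiplying (legitimate since all quantities are positive for $d\geq3$), equivalent to $2d(d-1)\geq 2(d+1)(d-2)$, i.e.\ $d^2-d\geq d^2-d-2$, which holds (strictly). Hence Theorem~\ref{thmsimple} applies with $p=p^*$ and yields~\eqref{conj}, up to subpolynomial losses, whenever
\[
\delta>\lambda^{-e(d)},\qquad e(d):=\frac{(d-1)p^*-d\,p_{ST}+2}{(d+1)p^*-d\,p_{ST}-2}.
\]

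Next I would evaluate $e(d)$. Substituting $p^*=\frac{2d}{d-2}$ and $p_{ST}=\frac{2(d+1)}{d-1}$ and writing each of the numerator and denominator of $e(d)$ as a single fraction over the common denominator $(d-1)(d-2)$, one finds
\[
(d-1)p^*-d\,p_{ST}+2=\frac{4}{(d-1)(d-2)},\qquad (d+1)p^*-d\,p_{ST}-2=\frac{4(2d-1)}{(d-1)(d-2)},
\]
the first because $2d(d-1)^2-2d(d+1)(d-2)+2(d-1)(d-2)$ collapses to the constant $4$, the second because $2d(d+1)-2(d-1)(d-2)=8d-4=4(2d-1)$. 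Taking the quotient, the common factor $\tfrac{4}{(d-1)(d-2)}$ cancels and $e(d)=\frac{1}{2d-1}$, which is exactly the threshold claimed. (Numerical check at $d=3$: $p^*=6$, $p_{ST}=4$, numerator $12-12+2=2$, denominator $24-12-2=10$, $e(3)=\tfrac15=\tfrac1{2\cdot3-1}$.)

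Finally, for bookkeeping: the standing hypothesis $\delta>\lambda^{-1}$ of~\eqref{conj} is automatic once $\delta\geq\lambda^{-1/(2d-1)}$ with $\lambda>1$, and the statement presupposes $d\geq3$ so that $p^*$ is finite. It is also reassuring to record that $\sigma(p^*)=1$ and $\frac{d-1}{2}\big(\frac12-\frac1{p^*}\big)=\frac{d-1}{2d}$, so that at $p=p^*$ the conjectured bound reads $\|P_{\lambda,\delta}\|_{L^2\to L^{p^*}}\lesssim_\epsilon\lambda^\epsilon\big((\lambda\delta)^{\frac{d-1}{2d}}+(\lambda\delta)^{1/2}\big)$, the second term dominating throughout $\delta\geq\lambda^{-1}$. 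There is no analytic obstacle here at all; the only place where care is needed is the cancellation computing $e(d)$, so I would double-check the two polynomial identities above (e.g.\ by substituting $d=3$ and $d=4$) to be sure a sign has not crept in.
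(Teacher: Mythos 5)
Your proposal is correct and is exactly the paper's own route: the authors obtain Corollary~\ref{thmp*} by substituting $p=p^*=\frac{2d}{d-2}$ into the exponent $\frac{(d-1)p-dp_{ST}+2}{(d+1)p-dp_{ST}-2}$ of Theorem~\ref{thmsimple} and simplifying, and your algebra (both polynomial identities and the resulting value $\frac{1}{2d-1}$) checks out. The only cosmetic difference is the strict versus non-strict inequality at the threshold, which is immaterial for a polynomial bound and is how the paper itself states it.
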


\textcolor{black}{In Theorems~\ref{thmsimple} and Corollary~\ref{thmp*} we have aimed to provide simple statements, which are consequently somewhat weaker than Theorem~\ref{thm:main} below. For any particular \(d\) these last results can be improved by a short computation. We present the following as a representative example.}

\begin{thm}[The case $d=3$] \label{thmd3} For any positive definite quadratic form $Q$, the conjecture~\eqref{conj} is verified, up to subpolynomial losses, if $d=3$, whenever $\lambda>1$, \textcolor{black}{\(\delta \geq \min\{\lambda^{-\frac{3p-8}{5p-8}}, \lambda^{-\frac{8-p}{5p-16}}\}\) and also \(\delta \geq\lambda^{-1/2}\)}.
	\end{thm}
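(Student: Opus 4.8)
The plan is to derive Theorem~\ref{thmd3} by specializing the general result Theorem~\ref{thm:main} (and the already-stated consequences Theorems~\ref{thmpST} and~\ref{thmsimple}) to the case $d=3$, and then optimizing the exponent of $\delta$ by a direct computation. First I would split into the two ranges $p < p_{ST}$ and $p \geq p_{ST}$, where for $d=3$ we have $p_{ST} = 4$. In the range $1 < p < 4$, Theorem~\ref{thmpST} already gives the conjecture~\eqref{conj} up to subpolynomial losses for all $\delta \geq \lambda^{-1}$, so in particular for $\delta \geq \lambda^{-1/2}$; here there is nothing more to prove, and one checks that $\min\{\lambda^{-(3p-8)/(5p-8)}, \lambda^{-(8-p)/(5p-16)}\} \leq \lambda^{-1/2}$ is not needed as a constraint since the $\lambda^{-1/2}$ bound already subsumes it. So the real content is the range $p \geq 4$.

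For $p \geq p_{ST} = 4$, I would start from Theorem~\ref{thmsimple}, which gives the conjecture for $\delta > \lambda^{-\eta(p)}$ with
\[
\eta(p) = \frac{(d-1)p - d\,p_{ST} + 2}{(d+1)p - d\,p_{ST} - 2}.
\]
Substituting $d=3$, $p_{ST}=4$ gives $\eta(p) = \frac{2p - 10}{4p - 14} = \frac{p-5}{2p-7}$; a short manipulation shows this equals neither of the two exponents in the statement, so Theorem~\ref{thmsimple} alone is not sharp enough and one must instead go back to the full Theorem~\ref{thm:main}. The exponents $\frac{3p-8}{5p-8}$ and $\frac{8-p}{5p-16}$ strongly suggest that Theorem~\ref{thm:main}, specialized to $d=3$, produces (at least) two competing threshold conditions on $\delta$ coming from different terms in the bound — presumably one governing the regime where the first term $(\lambda\delta)^{(d-1)(1/2-1/p)/2}$ dominates and one where the second term $\lambda^{\sigma(p)/2}\delta^{1/2}$ dominates, or alternatively two different interpolation endpoints used in the proof of Theorem~\ref{thm:main}. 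The plan is therefore to write out Theorem~\ref{thm:main} with $d=3$, identify each constraint it imposes on $\delta$ as a function of $p$, simplify each to the form $\delta \geq \lambda^{-c_i(p)}$, and observe that the hypothesis is satisfied as soon as $\delta$ exceeds the minimum of the corresponding thresholds — which is exactly $\min\{\lambda^{-(3p-8)/(5p-8)}, \lambda^{-(8-p)/(5p-16)}\}$, together with the global a priori restriction $\delta \geq \lambda^{-1/2}$ that already appears in the general theorem (coming, presumably, from the lattice-point/dispersive input valid only for $\delta$ not too small relative to $\lambda$).

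Concretely, the key steps are: (i) record $p_{ST}=4$, $p^* = 6$, $\sigma(p) = 2 - 6/p$ for $d=3$; (ii) for $1<p<4$ invoke Theorem~\ref{thmpST} directly; (iii) for $p \geq 4$ expand the general Theorem~\ref{thm:main} at $d=3$, extract the finitely many inequalities it requires of $(\lambda,\delta,p)$, and solve each for $\delta$; (iv) check that two of these solved inequalities read $\delta \geq \lambda^{-(3p-8)/(5p-8)}$ and $\delta \geq \lambda^{-(8-p)/(5p-16)}$ (the first presumably active for $p$ near $4$ or up to $p^*$, the second for larger $p$, with the crossover at some explicit $p$ where $(3p-8)/(5p-8) = (8-p)/(5p-16)$), while the remaining ones are implied once $\delta \geq \lambda^{-1/2}$; and (v) conclude. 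The main obstacle I anticipate is bookkeeping: Theorem~\ref{thm:main} is described in the text as ``a rather cumbersome formula,'' so the work is in carefully tracking which of its several cases and interpolation parameters are binding at $d=3$ and verifying that no hidden constraint forces $\delta$ to be larger than the claimed threshold — in particular making sure the subpolynomial-loss factors and the endpoint behaviour at $p=4$ and $p=6$ match up, and that the $\min$ of the two exponents, rather than a more complicated region, is genuinely what the theorem yields. No new analytic idea should be required beyond what already underlies Theorem~\ref{thm:main}.
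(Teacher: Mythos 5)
Your plan is essentially the paper's own proof: for $p\ge p_{ST}=4$ one specializes the first, ``exact but involved'' bound \eqref{eqn:full_bound} of Theorem~\ref{thm:main} to $d=3$ (so the sum contains only the $k=1,2$ terms) and compares each term with the conjectured right-hand side, the standing restriction $\delta\ge\lambda^{-\frac{d-1}{d+1}}=\lambda^{-1/2}$ being exactly the hypothesis of Theorem~\ref{thm:main}, while $p<4$ is already covered by Theorem~\ref{thmpST}. The only point you left to ``presumably'' resolves as follows: the $k=1$ term and the final term of \eqref{eqn:full_bound} are dominated outright, and both thresholds come from the single $k=2$ term $(\lambda\delta)^{\frac14\alpha(p)}(\lambda/\delta)^{\frac38\beta(p)}$ — requiring it to be dominated by $\lambda^{\sigma(p)/2}\delta^{1/2}$ gives $\delta\ge\lambda^{-\frac{3p-8}{5p-8}}$, and requiring it to be dominated by the Knapp term $(\lambda\delta)^{\frac12\alpha(p)}$ gives $\delta\ge\lambda^{-\frac{8-p}{5p-16}}$, so it suffices that either holds, whence the minimum.
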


\textcolor{black}{In the proofs of the results above, the value  \(\delta = \lambda^{-\frac{d-1}{d+1}}\) will emerge as playing a special role. In particular, to prove our conjecture in even a single case with \(p>p_{ST}\) and  \(\delta \ll \lambda^{-\frac{d-1}{d+1}}\)   needs a  different approach. This threshold also appears in the classical result \eqref{eqn:landau_shells}, and more generally when counting lattice points in a \(\delta\)-thick shell around  a manifold with curvature \(\sim \lambda^{-1}\) using for example Poisson summation. Substituting \(d=3\) into the last theorem does however yield the full range \(\delta > \lambda^{-\frac{d-1}{d+1}}\), as well as the the full range  \(p>2\), in the following setting:}

\begin{cor}If \(d=3\), then for any positive definite quadratic form $Q$ the conjecture holds for all   \(\delta \geq \lambda^{-\frac{d-1}{d+1}} = \lambda^{-1/2}\) if \(p\leq p_{ST}+\frac{4}{7}\) \textcolor{black}{or \(p\geq p_{ST}+4\)},  and it holds for all \(p>2\) if  $\delta> \lambda^{-2/5}$.
\end{cor}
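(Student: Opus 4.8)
The plan is to deduce this corollary directly from Theorem~\ref{thmd3}, using Theorem~\ref{thmpST} to cover the sub-Stein--Tomas range. Specialising to $d=3$ gives $p_{ST}=\frac{2(d+1)}{d-1}=4$, so for $p\ge p_{ST}$ Theorem~\ref{thmd3} asserts the conjecture (up to subpolynomial losses) whenever $\delta\ge\lambda^{-1/2}$ and simultaneously $\delta\ge\min\{\lambda^{-a(p)},\lambda^{-b(p)}\}$, with $a(p)=\frac{3p-8}{5p-8}$ and $b(p)=\frac{8-p}{5p-16}$. Since $\lambda>1$ we have $\min\{\lambda^{-a},\lambda^{-b}\}=\lambda^{-\max\{a,b\}}$, so the two hypotheses combine to $\delta\ge\lambda^{-\min\{1/2,\,\max\{a(p),b(p)\}\}}$. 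Everything therefore reduces to comparing $\max\{a(p),b(p)\}$ with the thresholds $\frac12$ and $\frac25$, an elementary exercise: for $p\ge p_{ST}=4$ the denominators $5p-8$ and $5p-16$ are positive, so clearing them does not flip inequalities, and one finds $a(p)\ge\frac12\iff p\ge 8$, $b(p)\ge\frac12\iff p\le\frac{32}{7}$, while $a(p)\ge\frac25\iff p\ge\frac{24}{5}$ and $b(p)\ge\frac25\iff p\le\frac{24}{5}$.

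For the first assertion I would observe that $\max\{a(p),b(p)\}\ge\frac12$ precisely when $p\le\frac{32}{7}=p_{ST}+\frac47$ or $p\ge 8=p_{ST}+4$; for such $p$ one has $\min\{1/2,\max\{a,b\}\}=1/2$, so Theorem~\ref{thmd3} yields the conjecture for every $\delta\ge\lambda^{-1/2}$. The boundary value $p=p_{ST}$ is included since $b(4)=1\ge\frac12$, and for $2<p<p_{ST}$ Theorem~\ref{thmpST} already supplies the conjecture for all $\delta\ge\lambda^{-1}\ge\lambda^{-1/2}$ (in any case this range lies inside $p\le p_{ST}+\frac47$). For the second assertion, note that $a$ and $b$ meet at $p=\frac{24}{5}$ with common value $\frac25$, whence $\max\{a(p),b(p)\}\ge\frac25$ for every $p\ge p_{ST}$; since $\frac25<\frac12$ this gives $\min\{1/2,\max\{a,b\}\}\ge\frac25$, so Theorem~\ref{thmd3} applies as soon as $\delta>\lambda^{-2/5}$ (which also forces $\delta>\lambda^{-1/2}$). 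Combined again with Theorem~\ref{thmpST} on $2<p<p_{ST}$, this covers the whole range $p>2$.

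There is essentially no obstacle: all the analytic content already resides in Theorems~\ref{thmpST} and~\ref{thmd3}, and the corollary is merely the optimisation of their $\delta$-thresholds as $p$ varies. The only points requiring a little care are tracking the signs of $5p-8$ and $5p-16$ (harmless throughout $p\ge p_{ST}$) and making sure the endpoint $p=p_{ST}$ is not lost in the passage between the two theorems.
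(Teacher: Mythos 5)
Your proposal is correct and is exactly the intended route: the paper presents this corollary as an immediate consequence of substituting $d=3$ (so $p_{ST}=4$) into Theorem~\ref{thmd3}, and your elementary comparison of the exponents $\frac{3p-8}{5p-8}$ and $\frac{8-p}{5p-16}$ with the thresholds $\frac12$ and $\frac25$ (crossing points $p=\frac{32}{7}$, $p=8$, and $p=\frac{24}{5}$), together with Theorem~\ref{thmpST} for $2<p<p_{ST}$, is precisely the computation needed. No gaps; your care with the signs of the denominators and with the endpoint $p=p_{ST}$ is appropriate.
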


The proof of the above results will combine a number theoretical argument, which allows one to count the number of caps in a spherical shell which contain many lattice points, with a harmonic analysis approach, relying in particular on the $\ell^2$ decoupling theorem of Bourgain and Demeter.

\bigskip

In order to understand better the statement of these theorems, it is helpful to spell out what they imply for each of the classical problems presented in sections~\ref{subsec1},~\ref{subsec2},~\ref{subsec3}.
\begin{itemize}
\item \textcolor{black}{For the problem of counting points in thin spherical shells (Subsection~\ref{subsec1}) we recover the bound \eqref{eqn:landau_shells} of Landau~\cite{Landau15}, see also the comments after Theorem~\ref{thmcaps}.}
\item \textcolor{black}{The problem of bounding $L^p$ norms of eigenfunctions was prevously considered for rational tori, that is \(\R^d/A\Z^d\) where \(A\in \operatorname{GL}_d(\Q)\). Our results do not improve the bounds of Bourgain-Demeter~\cite{BourgainDemeter3} in this case. For generic tori, eigenfunction bounds are trivial;  the natural analogue of bounding the $L^p$ norms of eigenfunctions is to bound the operator norm of $P_{\lambda,\frac{1}{\lambda}}$, and this question does not appear to have been considered before. For any torus, that is any \(\R^d/B\Z^d\) with \(B\in \operatorname{GL}_d(\R)\), we obtain from Theorem~\ref{thm:main} below the bound
\begin{align*}
	\| P_{\lambda,\frac{1}{\lambda}} \|_{L^2 \to L^p} &\lesssim_\epsilon \lambda^\epsilon
	(\lambda^{\frac{d}{d+1}})^{\frac{1}{2}(1-\frac{2}{p})+\frac{d}{2}(1- \frac{p_{ST}}{p})-\sqrt{(1 - \frac{2}{p})
			(1- \frac{p_{ST}}{p})}}
	&
	(p&\geq p_{ST}).
\end{align*}
}

\item For the problem of proving uniform resolvent bounds (Subsection~\ref{subsec3}), it proves the desired estimate up to a subpolynomial loss
$$\| P_{\lambda,\delta} \|_{L^{(p^*)'} \to L^{p^*}}  \lesssim_\epsilon \lambda^{1+\epsilon} \delta, \qquad $$
if \( \delta \geq \lambda^{-1+\frac{2 d}{5 d - 4}}\), 
or if \(d=3\) and \(\delta > \lambda^{-1/2}\),  improving over \textcolor{black}{Hickman's~\cite{Hickman}  result that} $\lambda^{-\frac{1}{3} -\frac{d}{3(21d^2-d-24)}}$.

\end{itemize}
\subsection{Acknowledgments}
The authors are grateful to the anonymous referee for a careful reading of their manuscript, and pointing out an error in an earlier version; they are also thankful to Yu Deng for insightful discussions at an early stage of this project.

While working on this project SLRM was  supported by DFG project number 255083470, and by a Leverhulme Early Career Fellowship. PG was supported by the NSF grant DMS-1501019, by the Simons collaborative grant on weak turbulence, and by the Center for Stability, Instability and Turbulence (NYUAD).

\section{Notation}\label{sec:notation}  Throughout, \textcolor{black}{$p_{ST} = \frac{2(d+1)}{d-1},$ and $\sigma(p) = d - 1 - \frac{2d}{p}$ will be as in section~\ref{sec:euclidean-space}, and  $p^* = \frac{2d}{d-2}$ as in section~\ref{subsec2}.}
We adopt the following normalizations for the Fourier series on $\mathbb{T}^d$ and Fourier transform on $\mathbb{R}^d$, respectively:
\begin{align*}
& f(x) = \sum_{k \in \mathbb{Z}^d} \widehat{f}_k e^{2\pi i k \cdot x}, \qquad \qquad \widehat{f}_k = \int_{\mathbb{T}^d} f(x) e^{-2\pi i k \cdot x} \,dx \\
& f(x) = \int_{\mathbb{R}^d} \widehat{f}(\xi) e^{2\pi ix \cdot \xi} \,dx, \qquad \qquad \widehat{f}(\xi) = \int_{\mathbb{R}^d} f(x) e^{-2\pi ix \cdot \xi} \,dx 
\end{align*}
The Poisson summation formula is then given by
$$
\sum_{n \in \mathbb{Z}^d} f(n) = \sum_{k \in \mathbb{Z}^d} \widehat{f}(k).
$$

We write \((\vecs{1}{v}|\cdots|\vecs{k}{v})\) for the matrix with columns \(\vecs{i}{v}\).

Given two quantities $A$ and $B$, we write $A \lesssim B$ or equivalently \(A = O(B)\) if there exists a constant $C$ such that $A \leq CB$, and $A \lesssim_{a,b,c} B$ if the constant $C$ is allowed to depend on $a,b,c$. We always allow \(C\) to depend on the dimension \(d\). In the following, it will often be the case that the implicit constant will depend on $\beta$, and on an arbitrarily small power of $\lambda$: $A \lesssim_{\beta,\epsilon} \lambda^\epsilon B$. When this is clear from the context, we simply write $A \lesssim \lambda^\epsilon B$. When we are assuming that the implicit constant is sufficiently small, we will write \(A\ll B\).

If both \(A\lesssim B\) and \(B\lesssim A\) then we write \(A\sim B\).

\section{Lower bounds and conjecture}

\label{lbac}

\subsection{The discrete Knapp example}

\begin{lem} \label{llb1} For any $n \in \mathbb{N}$, there exists $\lambda \sim |n|$ such that
if $\delta \in (0,1)$, 
$$
\| P_{\lambda,\delta} \|_{L^{2} \to L^p} \gtrsim (1 + \lambda \delta)^{\frac{(d-1)}{2} \left( \frac{1}{2} - \frac{1}{p} \right)}.
$$
\end{lem}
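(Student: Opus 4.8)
The plan is to construct an explicit test function $f$ on $\mathbb{T}^d$ whose Fourier support consists of lattice points lying in a Knapp-type cap on the sphere $\{Q(k) \sim \lambda^2\}$, and to compare $\|P_{\lambda,\delta} f\|_{L^p}$ against $\|f\|_{L^2}$. First I would fix $n \in \mathbb{N}$ and choose $\lambda \sim |n|$ adapted to $Q$ so that the ellipsoidal shell $\{|\sqrt{Q(k)} - \lambda| \lesssim \delta\}$ genuinely contains lattice points; the precise choice of $\lambda$ (rather than an arbitrary value) is what lets us guarantee a nonempty lattice intersection. Near a fixed point $k_0$ on this shell, the shell is, to leading order, a slab of thickness $\sim \delta$ around the tangent hyperplane, and a cap of the shell of angular width $\theta$ has dimensions roughly $\underbrace{\lambda\theta \times \cdots \times \lambda\theta}_{d-1} \times \delta$ in the tangent/normal decomposition, provided $\lambda \theta^2 \lesssim \delta$, i.e. $\theta \sim \sqrt{\delta/\lambda}$. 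The set $S$ of lattice points in such an optimally-sized cap then has cardinality $|S| \sim (\lambda \theta)^{d-1} \cdot \delta \sim (\lambda \sqrt{\delta/\lambda})^{d-1}\,\delta \cdot \lambda^{-1}\cdot\lambda= (\lambda\delta)^{(d-1)/2}$ when this is $\gtrsim 1$ (and $|S| \sim 1$ otherwise, which is why the $1 + \lambda\delta$ appears).

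I would then set $f = \sum_{k \in S} e^{2\pi i k \cdot x}$, so that $\widehat f_k = \mathbf 1_S(k)$ and, since $\chi = 1$ on $[-\tfrac12,\tfrac12]$ and $S$ lies well inside the shell (shrink the cap by a constant if necessary), $P_{\lambda,\delta} f = f$. The $L^2$ norm is immediate: $\|f\|_{L^2}^2 = |S|$. For the lower bound on $\|f\|_{L^p}$ with $p \geq 2$, the standard Knapp heuristic is that $|f(x)|$ is comparable to $|S|$ on a dual box of measure $\sim 1/|S|$ — more precisely, on the set where $k \cdot x$ varies by at most a small constant over all $k \in S$, which is a box of dimensions $\sim \tfrac{1}{\lambda\theta} \times \cdots \times \tfrac{1}{\delta}$ and hence measure $\sim |S|^{-1}$. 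On that set $\operatorname{Re} f(x) \gtrsim |S|$, so $\|f\|_{L^p}^p \gtrsim |S|^p \cdot |S|^{-1} = |S|^{p-1}$, giving $\|f\|_{L^p} \gtrsim |S|^{1 - 1/p}$. Combining, $\|P_{\lambda,\delta}\|_{L^2 \to L^p} \geq \|f\|_{L^p}/\|f\|_{L^2} \gtrsim |S|^{1 - 1/p} / |S|^{1/2} = |S|^{1/2 - 1/p} \sim (1 + \lambda\delta)^{\frac{d-1}{2}(\frac12 - \frac1p)}$, as desired.

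The main obstacle, and the step deserving the most care, is the number-theoretic input: showing that for a suitable $\lambda \sim |n|$ the cap $S$ really contains $\gtrsim (\lambda\delta)^{(d-1)/2}$ lattice points (when that quantity exceeds $1$) rather than merely $O$ of that. For the standard form $Q(x) = |x|^2$ one can be fully explicit — e.g. take $k_0 = (n, 0, \dots, 0)$ and perturb in the remaining coordinates, choosing $\lambda$ so that $\lambda - \delta \le |k_0| \le \lambda$ — and count integer points $(k^2, k^3, \dots, k^d)$ with $|k^j| \lesssim \lambda\theta$ such that $\sqrt{n^2 + |k'|^2}$ stays in the shell; the constraint is that $|k'|^2/n \lesssim \delta$, which is automatically satisfied by the cap geometry, so the count is $\sim \prod_j (\lambda\theta) \sim (\lambda\theta)^{d-1} \sim (\lambda\delta)^{(d-1)/2}$. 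For a general positive definite $Q$, I would reduce to this case: after a linear change of variables $Q$ becomes the standard form but on a rotated/sheared lattice $\Lambda = B\mathbb Z^d$; one then picks a primitive vector $v_0 \in \Lambda$ with $Q(v_0) = |B^{-1}v_0|$-type length comparable to $|n|$, sets $\lambda \sim \sqrt{Q(v_0)}$, and counts points of $\Lambda$ in the cap of the $\sqrt{Q}$-sphere around $v_0$ — the lattice $\Lambda$ has covolume $\sim 1$, so a box of dimensions $(\lambda\theta)^{d-1}\times\delta$ with all side lengths $\gtrsim 1$ contains $\gtrsim$ (volume) many points of $\Lambda$ by a standard Minkowski/geometry-of-numbers argument. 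The one genuinely delicate point is ensuring the box can be placed so that it lies inside the shell \emph{and} has all sides $\gtrsim 1$; since we are free to enlarge $\lambda$ among $\lambda \sim |n|$ and the shell has width $\delta > \lambda^{-1}$ in the normal direction (here we may even just need $\lambda\delta \gtrsim 1$ for the bound to be nontrivial), this can always be arranged, possibly at the cost of absolute constants that are harmless for the stated $\gtrsim$.
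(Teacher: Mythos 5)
Your construction works, and matches the paper's, in the special case $Q=|x|^2$: there the points $(n,k')$ with $|k'|\le c\sqrt{\lambda\delta}$ all lie in the shell, and the Knapp-type computation $\|f\|_{L^p}/\|f\|_{L^2}\gtrsim |S|^{\frac12-\frac1p}$ is fine. But for general positive definite $Q$ (the actual scope of the lemma) your lattice-point count has a genuine gap, in two ways. First, an arithmetic slip: the quantity you need is $|S|\gtrsim (1+\lambda\delta)^{\frac{d-1}{2}}=(1+\sqrt{\lambda\delta})^{d-1}$, i.e.\ the \emph{full tangential grid count}, whereas the volume of the cap is $(\lambda\theta)^{d-1}\cdot\delta=(\lambda\delta)^{\frac{d-1}{2}}\,\delta$; your step ``$\cdot\,\lambda^{-1}\cdot\lambda$'' silently drops this factor of $\delta$. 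A count proportional to the volume would only give $\|P_{\lambda,\delta}\|_{L^2\to L^p}\gtrsim\big((\lambda\delta)^{\frac{d-1}{2}}\delta\big)^{\frac12-\frac1p}$, strictly weaker than the lemma. Second, the geometry-of-numbers argument you invoke cannot be applied: the box has one side of length $\sim\delta<1$, so ``all side lengths $\gtrsim 1$'' is never satisfied, and a box with a side much shorter than the covolume scale of $B\mathbb{Z}^d$ may contain no lattice points at all (and generically contains only $\sim$ volume many). Enlarging $\lambda$ within $\lambda\sim|n|$ does not help, because the offending side is the normal thickness $\delta$, which is independent of $\lambda$. So the ``delicate point'' you flag is not merely delicate; as stated, the argument does not produce the required $(\sqrt{\lambda\delta})^{d-1}$ points for general $Q$.

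The way the paper closes exactly this gap is by choosing \emph{where} the cap sits and \emph{which} $\lambda\sim n$ to take, so that an entire affine lattice hyperplane is tangent to the ellipsoid at the cap's center. Concretely: let $\xi_0$ be the point of $\{Q=1\}$ where the outward normal is parallel to $e_d$, and choose $\lambda$ so that $\lambda\xi_0^d=n\in\mathbb{Z}$ (then $\lambda=n/\xi_0^d\sim n$, which is precisely why the lemma only claims ``there exists $\lambda\sim|n|$''). The tangent plane to $\{Q=\lambda^2\}$ at $\lambda\xi_0$ is $\{\xi^d=n\}$, and for tangential displacements $t$ one has $\sqrt{Q(\lambda\xi_0+t)}=\lambda+O(|t|^2/\lambda)$, so every integer point $(\xi',n)$ with $|\xi'-\lambda\xi_0'|\le c\sqrt{\lambda\delta}$ lies in the shell $\{|\sqrt{Q}-\lambda|<\delta/2\}$. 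This yields $\sim(1+\sqrt{\lambda\delta})^{d-1}$ lattice points in a single cap with no lattice-counting input beyond this alignment, and it recovers the missing factor $\delta^{-1}$ over the volume heuristic. With that set $S$ in hand, your sharp-cutoff Knapp argument (or the paper's smooth variant with a bump $\phi$, $\widehat\phi\ge 0$, and Poisson summation) gives the stated lower bound; note also that your ``dual box of measure $1/|S|$'' claim implicitly uses that all points of $S$ lie in one hyperplane $\{\xi^d=n\}$, so it is only consistent with this single-layer construction, not with a volume-type count.
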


\begin{proof} Consider the ellipse $\{ \xi \in \mathbb{R}^d, \; Q(\xi) = 1\}$. Its normal vector is colinear to $e_d = (0,\dots,0,1)$ at the point $\xi_0$. We now dilate this ellipse by a factor $\lambda$ such that $\lambda \xi^d_0 = n \in \mathbb{N}$, smear it to a thickness $\delta$, and observe that, around the point $\lambda \xi_0$, it contains many lattice points of $\mathbb{Z}^d$. More precisely, the cuboid $C \subset \mathbb{R}^d$ defined by
$$
C = \{ |\xi^i - \lambda \xi^i_0| < c \sqrt{\lambda \delta} \;\; \mbox{for $i = 1,\dots,d-1$} \quad \mbox{and} \quad  |\xi^d - \lambda \xi^d_0| < c\delta \}
$$
(where the constant $c$ is chosen to be sufficiently small) is such that
$$
C \subset \left\{ \xi \in \mathbb{R}^d, \,\lambda-\frac{\delta}{2} < \sqrt{Q(\xi)} < \lambda + \frac{\delta}{2} \right\}.
$$
Furthermore, for $\lambda\in\mathbb{Z}$, $C$ will contain $\sim (1 + \lambda \delta)^{\frac{d-1}{2}}$ points in $\mathbb{Z}^d$. Writing $\xi = (\xi',\xi^d)$ and $x=(x',x^d)$, let
$$
f(x) = e^{2\pi i\lambda \xi_0^d x^d}\sum_{\xi' \in \mathbb{Z}^{d-1}} \phi \left( \frac{\xi'- \lambda \xi_0'}{\sqrt{\lambda \delta}} \right) e^{2\pi i \xi' \cdot x'}, 
$$
where $\phi \in \mathcal{C}_0^\infty (\mathbb{R}^{d-1})$ is such that $\widehat{\phi} \geq 0$ and $\operatorname{Supp} \widehat{\phi} \subset C$. By the Poisson summation formula, $f$ can be written
$$
f(x) = e^{2\pi i\lambda x^d  \xi_0^d} (\lambda \delta)^{\frac{d-1}{2}} \sum_{n \in \mathbb{Z}^{d-1}} \widehat{\phi}( \sqrt{\lambda \delta}(x'-n)) e^{2 \pi i \lambda \xi_0' \cdot (x'-n)},
$$
which implies
$$
\| f \|_{L^p} \sim (1 + \lambda \delta)^{\frac{d-1}{2} \left( 1 - \frac{1}{p} \right)}.
$$
Since $P_{\lambda,\delta} f = f$, we find that
$$
\| P_{\lambda,\delta} \|_{L^{2} \to L^p} \geq \frac{\| f \|_{L^p}}{\| f \|_{L^{2}}} \sim (1 + \lambda \delta)^{\frac{(d-1)}{2} \left( \frac{1}{2} - \frac{1}{p} \right)}.
$$
\end{proof}

\subsection{The radial example} 
\begin{lem} 
\label{llb2}
For any $n \in \mathbb{N}$ and $\delta \in (0,1)$, there exists $\lambda$ such that $|n-\lambda| \lesssim 1$ and
$$
\| P_{\lambda,\delta} \|_{L^2 \to L^p} \gtrsim \lambda^{\sigma(p)/2} \sqrt \delta.
$$
\end{lem}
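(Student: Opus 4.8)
The plan is to produce a single test function whose $L^p$ norm, when divided by its $L^2$ norm, realizes the claimed lower bound $\lambda^{\sigma(p)/2}\sqrt\delta$; the natural candidate is the (smoothed) Dirichlet-type kernel of $P_{\lambda,\delta}$ itself, exploiting its concentration near the origin. Concretely, I would pick $\lambda$ close to an integer $n$ (so $|n-\lambda|\lesssim 1$) and set $f = P_{\lambda,\delta}\,\widetilde{P}_{\lambda,\delta}\delta_0$-type object — more precisely, let $f(x)=\sum_k \psi\big(\tfrac{\sqrt{Q(k)}-\lambda}{\delta}\big)e^{2\pi i k\cdot x}$ for a suitable bump $\psi$ supported where $\chi\equiv 1$, so that $P_{\lambda,\delta}f=f$ automatically. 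The number of lattice points $k$ in the shell $\{|\sqrt{Q(k)}-\lambda|\lesssim\delta\}$ is $\sim \delta\lambda^{d-1}$ (for $\delta\gtrsim\lambda^{-1}$; for smaller $\delta$ one instead works with the continuous model and accepts $\gtrsim 1$), which gives $\|f\|_{L^2}^2\sim \delta\lambda^{d-1}$ and $\|f\|_{L^\infty}=f(0)\sim\delta\lambda^{d-1}$.

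The heart of the matter is the $L^p$ estimate, $2\le p\le\infty$, which requires understanding the kernel $f(x)$ away from $x=0$. The idea is that $f$ is essentially the inverse Fourier transform of the indicator of a $\delta$-neighbourhood of the ellipsoid $\{\sqrt{Q(\xi)}=\lambda\}$, restricted to the lattice. For $|x|$ not too large, Poisson summation identifies $f(x)$ with the analogous continuous quantity $\int_{\R^d}\psi\big(\tfrac{\sqrt{Q(\xi)}-\lambda}{\delta}\big)e^{2\pi i\xi\cdot x}\,d\xi$, and stationary phase on the ellipsoid (two stationary points, curvature $\sim\lambda^{-1}$, shell thickness $\delta$) yields the standard decay $|f(x)|\lesssim \delta\lambda^{d-1}\cdot(\lambda|x|)^{-\frac{d-1}{2}}$ for $\lambda^{-1}\lesssim|x|\lesssim\delta^{-1}$, with rapid decay beyond $|x|\gtrsim\delta^{-1}$. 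Integrating $|f|^p$ over the regime $|x|\lesssim \delta^{-1}$ (the torus is large enough in the relevant rescaled coordinates, or one simply works on $\R^d$ via the comparison in section~\ref{sec:euclidean-space}) gives, after a short computation,
$$
\|f\|_{L^p}^p \lesssim (\delta\lambda^{d-1})^p \int_{\lambda^{-1}}^{\delta^{-1}} (\lambda r)^{-\frac{(d-1)p}{2}} r^{d-1}\,dr,
$$
and the exponent $\sigma(p)=d-1-\tfrac{2d}{p}$ is exactly the bookkeeping constant that makes this integral produce $\|f\|_{L^p}\sim \delta\lambda^{d-1}\cdot\lambda^{-\frac{d-1}{2}+\frac{d}{p}}=\delta\lambda^{\frac{d-1}{2}+\frac{d}{p}}$ in the range where the $|x|\sim\lambda^{-1}$ end contributes (for $p$ above the relevant threshold the near-origin bump $|x|\lesssim\lambda^{-1}$ dominates and gives the same answer up to constants, while for $p$ small one checks the outer endpoint is harmless). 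Dividing by $\|f\|_{L^2}\sim(\delta\lambda^{d-1})^{1/2}$ yields $\|P_{\lambda,\delta}\|_{L^2\to L^p}\gtrsim \delta^{1/2}\lambda^{\frac{d-1}{2}+\frac{d}{p}-\frac{d-1}{2}}$; unwinding, this is $\lambda^{\sigma(p)/2}\sqrt\delta$ precisely because $\sigma(p)/2=\tfrac{d-1}{2}-\tfrac{d}{p}$ and the $L^2$ normalization shifts the exponent by $-(d-1)/2$ — I will double-check the arithmetic but this is the mechanism.

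The main obstacle I anticipate is making the passage from the lattice sum to the continuous oscillatory integral fully rigorous and uniform in $\delta$: Poisson summation introduces a sum over dual lattice shifts, and one must verify these satellite terms are negligible, which is where the hypothesis $|n-\lambda|\lesssim 1$ (arranging that the ellipsoid passes genuinely near a lattice point) and the support/smoothness of $\psi$ enter. A secondary technical point is the stationary-phase analysis when $Q$ is a general positive-definite form rather than $|\cdot|_2^2$: the phase $\xi\cdot x$ restricted to $\{Q=\lambda\}$ still has nondegenerate critical points with Hessian comparable to $\lambda^{-1}$ (uniformly, with constants depending on $Q$), so the decay rate $(\lambda|x|)^{-(d-1)/2}$ is unchanged, but one should phrase this invariantly. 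Finally, one must be slightly careful at $p=\infty$ and near the endpoint of the radial integral, but these are routine once the kernel bound is in hand; indeed the cleanest route may be to simply cite the Euclidean bounds \eqref{swallow} in reverse — the same example that saturates \eqref{swallow} on $\R^d$ transplants to the torus since it is supported at scale $\lesssim\delta^{-1}\ll$ a period once we work in $Q$-adapted coordinates — but I would prefer to give the direct computation for transparency.
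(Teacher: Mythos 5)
Your test function is essentially the paper's (an exponential sum over the lattice points of the shell), and the mechanism that actually produces the lower bound — concentration of size $N$ on a ball of radius $\sim\lambda^{-1}$ around the origin — is exactly what the paper encodes via Bernstein's inequality. But there is a genuine gap at the crucial arithmetic input: you assert that the shell $\{|\sqrt{Q(k)}-\lambda|\lesssim\delta\}$ contains $N\sim\delta\lambda^{d-1}$ lattice points. For a general positive definite $Q$ (irrational coefficients) and a \emph{fixed} $\lambda$ this can fail badly: individual shells may contain far fewer points, or none. The whole reason the lemma is phrased as ``there exists $\lambda$ with $|n-\lambda|\lesssim1$'' is that one must \emph{choose} $\lambda$ by an averaging/pigeonhole argument: the $\sim n^{d-1}$ lattice points with $\sqrt{Q(k)}\in[n-2,n+2]$ each lie in the $\delta$-shell for a set of $\lambda$'s of measure $\sim\delta$, so some $\lambda\in[n-1,n+1]$ has $N\gtrsim\delta\lambda^{d-1}$, uniformly for all $\delta\in(0,1)$. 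You never make this choice, and your fallback for $\delta\lesssim\lambda^{-1}$ (``accept $N\gtrsim1$'') is insufficient: since the ratio is $\|f\|_{L^p}/\|f\|_{L^2}\gtrsim\sqrt{N}\,\lambda^{-d/p}$, you need $N\gtrsim\delta\lambda^{d-1}$, which is $\gg1$ throughout the range $\lambda^{-(d-1)}\ll\delta\ll\lambda^{-1}$.

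Beyond that, most of the machinery you invoke is unnecessary and in part points the wrong way. Your displayed estimate is an \emph{upper} bound on $\|f\|_{L^p}$, which proves nothing toward a lower bound on the operator norm; the only estimate needed is the lower bound $|f|\gtrsim N$ on a ball of radius $c/\lambda$ (equivalently, Bernstein: $\|f\|_{L^p}\gtrsim\lambda^{-d/p}\|f\|_{L^\infty}$), which gives $\|f\|_{L^p}\gtrsim N\lambda^{-d/p}$ and, with $\|f\|_{L^2}=\sqrt N$ and $N\gtrsim\delta\lambda^{d-1}$, the claimed $\lambda^{\sigma(p)/2}\sqrt\delta$. (Your evaluation of the radial integral is also off: it gives $\delta\lambda^{d-1-\frac dp}$, not $\delta\lambda^{\frac{d-1}{2}+\frac dp}$, though after division the correct exponent does emerge.) Finally, the suggestion that the Euclidean radial example ``transplants'' because it lives at scale $\delta^{-1}\ll$ a period is backwards: $\delta^{-1}>1$ \emph{exceeds} the period, the periodized kernel does not obey the Euclidean decay globally, and Poisson summation with non-negligible satellite terms is precisely what the short Bernstein argument lets you avoid.
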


\begin{proof}
For any $n \in \mathbb{N}$, there exists $\lambda$ with $|n-\lambda| \lesssim 1$, and such that the corona
$$
\mathcal{C} = \{ \lambda -\frac{\delta}{2} < Q(x) < \lambda + \frac{\delta}{2} \}
$$
contains $N \gtrsim \lambda^{d-1} \delta$ points in $\mathbb{Z}^d$. Define
$$
f(x) = \sum_{\xi \in \mathcal{C} \cap \mathbb{Z}^{d-1}} e^{2\pi i \xi \cdot x}.
$$
It is clear that
$$
\| f \|_{L^\infty} = N \qquad \mbox{while} \qquad \| f \|_{L^2} = \sqrt{N}.
$$
By Bernstein's inequality, for $p\geq 2$,
$$
\| f \|_{L^p} \gtrsim \|f\|_{L^{\infty}} \lambda^{-\frac{d}{p}} \sim \lambda^{-\frac{d}{p}} N.
$$
Therefore, 
$$
\| P_{\lambda,\delta} \|_{L^2 \to L^p} \geq \frac{\|f\|_{L^p}}{\|f\|_{L^2}} \gtrsim \lambda^{\sigma(p)/2} \sqrt \delta.
$$
\end{proof}

\subsection{The conjecture} Based on lemmas~\ref{llb1} and~\ref{llb2}, it is reasonable to conjecture that
$$
\boxed{\| P_{\lambda,\delta} \|_{L^{2} \to L^p} \lesssim (\lambda \delta)^{\frac{(d-1)}{2} \left( \frac{1}{2} - \frac{1}{p} \right)} + \lambda^{\sigma(p)/2} \delta^{1/2}.}
$$
The next question is: how small can $\delta$ be taken? In full generality, the limitation is 
$$
\delta \geq \frac{1}{\lambda},
$$
as this is best possible for rational tori; this will be the range we consider here.

We now describe the different regimes involved in the above conjecture.

\bigskip \noindent \underline{If $d=2$}, the conjecture can be formulated as
\begin{itemize}
\item $\| P_{\lambda,\delta} \|_{L^{2} \to L^p} \lesssim (\lambda \delta)^{\frac{1}{4} - \frac{1}{2p} }$ if $\left\{ \begin{array}{l} 2 \leq p \leq 6\; \mbox{and} \; \delta > \lambda^{-1} \\ \mbox{or} \; p \geq 6 \; \mbox{and} \; \lambda^{-1} < \delta < \lambda^{\frac{6-p}{2+p}} \end{array} \right.$.
\item $\| P_{\lambda,\delta} \|_{L^{2} \to L^p} \lesssim \lambda^{\frac 1 2 - \frac{2}{p}} \delta$ if $p \geq 6$ and $\delta > \lambda^{\frac{6-p}{2+p}}$.
\end{itemize}

\bigskip \noindent \underline{If $d\geq 3$}, let
$$
p_{ST} = \frac{2(d+1)}{d-1}, \qquad p^* = \frac{2d}{d-2}, \qquad  \widetilde p = \frac{2(d-1)}{d-3}
$$
and define
$$
e(p) = \frac{d+1}{d-1} \cdot \frac{\frac{1}{p} - \frac{1}{p_{ST}}}{\frac{1}{p} - \frac{1}{\widetilde{p}}}
$$
We have
$$
p_{ST} < p^* < \widetilde{p}.
$$

Keeping in mind that $\delta > \lambda^{-1}$, the above conjecture becomes
\begin{itemize}
\item $\| P_{\lambda,\delta} \|_{L^{2} \to L^p} \lesssim (\lambda \delta)^{\frac{(d-1)}{2} \left( \frac{1}{2} - \frac{1}{p} \right)}$ if 
$\left\{ \begin{array}{l} 2 \leq p \leq p_{ST} \\ \mbox{or} \; p_{ST} \leq p \leq p^* \; \mbox{and} \; \delta < \lambda^{e(p)} \end{array} \right.$
\item $\| P_{\lambda,\delta} \|_{L^{2} \to L^p} \lesssim \lambda^{\sigma(p)/2} \delta^{1/2}$ if $\displaystyle \left\{ \begin{array}{l} p_{ST} \leq p \leq p^* \; \mbox{and} \; \delta > \lambda^{e(p)} \\ \mbox{or} \; p \geq p^* \end{array} \right.$
\end{itemize}

\section{Caps containing many points}

We split the spherical shell
$$
S_{\lambda,\delta} = \{ x \in \mathbb{R}^d, \, \left| \sqrt{Q(x)}-\lambda \right| < \delta \}
$$
into a collection $\mathcal{C}$ of almost disjoint caps $\theta$:
$$
S_{\lambda,\delta} = \bigcup_{\theta \in \mathcal{C}} \theta,
$$
where each cap is of the form
$$
\theta = \{ x \in \mathbb{R}^d, \, | x - x_\theta| < \sqrt{\lambda \delta} \} \cap S_{\lambda,\delta} \qquad \mbox{for some $x_\theta \in S_{\lambda,\delta}$}.
$$
Each cap fits into a rectangular box with dimensions $\sim \delta \times \sqrt{\lambda \delta} \times \dots \times \sqrt{\lambda \delta}$. We call \(\vec{n}_\theta = \frac{x_\theta}{|\vec{x}_\theta|_2}\) the \emph{normal vector} to \(\theta\); observe that as \(\theta\) varies over caps, the normal vector \(\vec{n}_\theta\) varies over a \(\sqrt{\delta/\lambda}\)-spaced set.

Denote $N_\theta$ for the number of points in $\mathbb{Z}^d \cap \theta$. On the one hand, it is clear that $N_\theta \lesssim (\sqrt{\lambda \delta})^{d-1}$. On the other hand, one expects that the average cap will contain a number of points comparable to its volume, in other words $N_\theta \sim (\sqrt{\lambda \delta})^{d-1} \delta$ (provided this quantity is $>1$, which occurs if $\delta > \lambda^{-\frac{d-1}{d+1}}$).

This leads naturally to defining the following sets, which gather caps containing comparable numbers of points
\begin{align*}
& \mathcal{C}_0 = \{ \theta \in \mathcal{C}, \, N_\theta < (\sqrt{\lambda \delta})^{d-1} \delta \} \\
& \mathcal{C}_j =  \{ \theta \in \mathcal{C}, \,  (\sqrt{\lambda \delta})^{d-1} \delta 2^{j-1}<N_\theta\leq  (\sqrt{\lambda \delta})^{d-1} \delta 2^{j}\} , \qquad \mbox{for $1\leq 2^j \lesssim \delta^{-1}$ }.
\end{align*}

\begin{thm}
\label{thmcaps}
There is a constant \(K>0\), depending only on \(d\), as follows.
Let \( k \in \{ 1, \dotsc, d-1 \} \). If \( 2^{ j } >K \) and $(\sqrt{ \delta \lambda })^{ k } \delta 2^{ j } >K$ then
\[
\# \mathcal{ C }_{ j }
\lesssim
( 2^{ j / k } \delta )^{ - d }.
\]
\end{thm}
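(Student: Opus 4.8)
The plan is to bound $\#\mathcal{C}_j$ by counting, via a lattice-point argument, the caps $\theta$ which contain $\gtrsim (\sqrt{\lambda\delta})^{k}\delta 2^j$ lattice points lying in a $k$-dimensional ``flat'' subcap. More precisely, if $\theta \in \mathcal{C}_j$ then $N_\theta \gtrsim (\sqrt{\lambda\delta})^{d-1}\delta 2^j$; by pigeonholing over a partition of the $(d-1)$-dimensional box of $\theta$ into $\sim (\sqrt{\lambda\delta})^{d-1-k}$ sub-boxes of dimensions $\delta \times (\sqrt{\lambda\delta})^{k}$ (in suitable coordinates where one axis is the normal direction $\vec n_\theta$), one of these sub-boxes $B_\theta$ contains $\gtrsim (\sqrt{\lambda\delta})^{k}\delta 2^j$ lattice points. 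Each such sub-box is contained in a box of dimensions $\delta \times \underbrace{\sqrt{\lambda\delta} \times \cdots \times \sqrt{\lambda\delta}}_{k} \times \underbrace{0 \times \cdots}_{d-1-k}$, i.e.\ it lies within distance $O(\delta)$ of an affine $k$-plane tangent to the sphere $\{\sqrt{Q(x)}=\lambda\}$. The point of restricting to a $k$-dimensional subcap is that $k$ points in ``general position'' in such a subcap determine its affine $k$-plane, hence determine the subcap up to $O(1)$ choices, which is what makes the count efficient.

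The core of the argument is the following counting step. Since $(\sqrt{\delta\lambda})^k\delta 2^j > K$ with $K$ large, each relevant sub-box $B_\theta$ contains many lattice points; choosing $k$ of them that are ``well-spread'' (spanning a parallelepiped of $k$-volume $\gtrsim (\sqrt{\lambda\delta})^{k}$, which is possible because the $\gtrsim (\sqrt{\lambda\delta})^k\delta 2^j$ points cannot all cluster near a lower-dimensional affine subspace once $2^j > K$) pins down the affine $k$-plane through $B_\theta$, and hence the cap $\theta$, to within $O(1)$ possibilities. So $\#\mathcal{C}_j \lesssim \#\{k\text{-tuples of lattice points, well-spread, each lying in a common }\delta\text{-shell subcap}\}$ divided by the number of such $k$-tuples inside a single $B_\theta$, the latter being $\gtrsim \big((\sqrt{\lambda\delta})^{k}\delta 2^j\big)^{k}$. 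The numerator is bounded by first choosing the lattice points: a lattice point on the sphere of radius $\sim \lambda$ has $\lesssim \lambda^{d-1}$ choices (it lies in $S_{\lambda,\delta}$, volume $\lambda^{d-1}\delta$, but we want a cruder count), actually one wants to iterate — fix the first point ($\lesssim \lambda^{d-1}\delta \cdot$ something), and then successive points are constrained to lie within $O(\delta)$ of the span of the previous ones intersected with the shell, a set of much smaller measure. Carrying this out, the $i$-th point ($1\le i \le k$) ranges over $\lesssim \lambda^{d-1}\delta \cdot (\sqrt{\lambda\delta})^{\,i-1}/\lambda^{\,i-1}$-ish many lattice points; multiplying and dividing by $\big((\sqrt{\lambda\delta})^k\delta 2^j\big)^k$ and simplifying should collapse to $(2^{j/k}\delta)^{-d}$.

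I would organize the write-up as: (i) reduce via pigeonhole to the $k$-dimensional subcap statement; (ii) prove a ``non-degeneracy'' lemma saying a subcap with $\gtrsim (\sqrt{\lambda\delta})^k\delta 2^j$ lattice points, $2^j>K$, contains $k$ lattice points spanning a $k$-parallelepiped of volume $\gtrsim (\sqrt{\lambda\delta})^k$ (equivalently, its lattice points are not concentrated in an $O(\delta)$-neighborhood of a $(k{-}1)$-plane); (iii) the iterated lattice-point count for well-spread $k$-tuples constrained to the shell; (iv) assemble the ratio. The main obstacle I anticipate is step (ii) together with the bookkeeping in step (iii): one must quantify ``well-spread'' so that simultaneously (a) enough such $k$-tuples exist inside each $B_\theta$ (so the denominator is genuinely of order $((\sqrt{\lambda\delta})^k\delta 2^j)^k$), and (b) the affine $k$-plane, and hence $\theta$, is determined up to $O(1)$ — this needs the spanning volume to be comparable to the full $(\sqrt{\lambda\delta})^k$, not merely positive, so that the ``fattened'' $k$-plane reconstructed from the tuple cannot wander. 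Keeping the dependence on $\delta$, $\lambda$, $2^j$ honest through the iteration — in particular using that each new point, being within $O(\delta)$ of an $i$-dimensional affine plane and on the sphere, lies in a set of measure $\sim \lambda^{i-1}\delta \cdot \delta^{\,d-i}$-order (a tube around a $(i{-}1)$-sphere) — is where the threshold $\delta > \lambda^{-(d-1)/(d+1)}$ and the exponent $-d$ in $(2^{j/k}\delta)^{-d}$ will have to be matched carefully.
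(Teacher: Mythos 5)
Your plan hinges on step (ii), the non-degeneracy lemma asserting that a cap (or sub-box) with $\gtrsim(\sqrt{\lambda\delta})^{k}\delta 2^{j}$ lattice points must contain $k$ of them spanning $k$-volume $\gtrsim(\sqrt{\lambda\delta})^{k}$, ``because the points cannot all cluster near a lower-dimensional affine subspace once $2^{j}>K$.'' This is false, and its failure is exactly the main difficulty of the theorem. Lattice points in a cap can be extremely anisotropic: for instance with $d=3$ and $\delta\sim\lambda^{-1/2}$, a cap may contain a segment of a lattice line with up to $\sqrt{\lambda\delta}$ collinear points, and since the hypotheses for $k=2$ only require $2^{j}>K$ and $\lambda\delta^{2}2^{j}>K$, one can have $\theta\in\mathcal{C}_{j}$ all of whose $\sim 2^{j}\ll\sqrt{\lambda\delta}$ lattice points are collinear, so no pair spans area $\gtrsim\lambda\delta$. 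Once degenerate configurations are allowed, the ratio argument breaks both ways: if you keep ``well-spread'' the denominator fails (such tuples need not exist in each cap of $\mathcal{C}_{j}$), and if you drop it the numerator fails --- indeed your iterated count assumes the very constraint you would like to exploit, namely that the $i$-th point lies within $O(\delta)$ of the affine span of the previous ones, which is not implied by membership in a common subcap (generic points of a cap sit at distance up to $\sqrt{\lambda\delta}$ from such a span), while for the degenerate caps where it does hold the per-cap tuple count is wrong. There are further problems: $k$ points determine only a $(k-1)$-plane; the input ``first point has $\lesssim\lambda^{d-1}\delta$ choices'' is essentially the Landau-type shell count that this theorem is meant to recover, so it cannot be used as a trivial step; and the final assembly ``should collapse to $(2^{j/k}\delta)^{-d}$'' is never verified --- note the target bound is independent of $\lambda$, which your tuple count does not visibly produce.

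The paper's proof is designed precisely to avoid choosing well-spread points. For each cap it takes a box $R_{\theta}\supset\theta-\theta$ and quantifies the anisotropy of $\Z^{d}\cap R_{\theta}$ via the successive minima of $\Z^{d}$ with respect to $R_{\theta}$, in particular the dimension $r_{\theta}$ of the span of $\Z^{d}\cap R_{\theta}$, producing an adapted basis $x^{(1)},\dotsc,x^{(d)}$ of $\Z^{d}$ with $\#(\Z^{d}\cap R_{\theta})\sim\prod_{i\leq r_{\theta}}|x^{(i)}|_{\theta}^{-1}$. It then encodes the cap not by a tuple of its lattice points but by the single integer vector $v=x^{(1)}\wedge\dotsb\wedge x^{(d-1)}$, which is shown to be short, $|v|_{2}\lesssim\delta^{-1}\bigl(\delta(\sqrt{\lambda\delta})^{d-1}/\#(\Z^{d}\cap R_{\theta})\bigr)^{1/(d-r_{\theta})}$, and nearly parallel to the normal $n_{\theta}$; since the normals of distinct caps are $\sqrt{\delta/\lambda}$-separated, counting the admissible $v$ (with the multiplicity of normals each can approximate) yields $\#\mathcal{C}_{j}\lesssim(2^{j/k}\delta)^{-d}$, after observing that many points force $r_{\theta}\geq d-k$. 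To salvage your approach you would need a substitute for (ii) valid for every possible shape of $\Z^{d}\cap R_{\theta}$, which is in effect what this geometry-of-numbers step supplies.
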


\begin{rem}
\textcolor{black}{There is an integer $k \in \{ 1, \dotsc, d-1 \}$ satisfying $(\sqrt{ \delta \lambda })^{ k } \delta 2^{ j } >K$ whenever $\delta > \lambda^{-\frac{d-1}{d+1}}$, which in practise we will assume whenever we apply the  theorem above.}
\end{rem}

\textcolor{black}{
By summing over the caps in each \(  \mathcal{C}_j \), we find that \( \#(S_{\lambda,\delta}\cap \Z^d) \ll \delta \lambda^{d-1} +\sqrt{\lambda/\delta}^{d-1} \), and in particular \( \#(S_{\lambda,\delta}\cap \Z^d) \ll \delta \lambda^{d-1}\) for \(\delta > \lambda^{-\frac{d-1}{d+1}}\). This recovers the  classical result \eqref{eqn:landau_shells}. As explained in section~\ref{subsec1}, the range  \(\delta > \lambda^{-\frac{d-1}{d+1}}\) has now been improved in every dimension. Thus Theorem~\ref{thmcaps} is certainly suboptimal if  \(\delta \ll \lambda^{-\frac{d-1}{d+1}}\).}

To further gauge the strength of this theorem we can compare it to the trivial bounds 
\begin{align}
\# \mathcal{C}_0 &\lesssim ( \lambda \delta^{-1})^{\frac{d-1}{2}},\label{eqn:C-zero}
\\
\# \mathcal{C}_j &=0\qquad(
2^j \gg \delta^{-1}).
\label{eqn:C-max}
\end{align}
	If $\delta = \lambda^{-\frac{d-1}{d+1}}$ then the  theorem interpolates between these bounds. For  it  reduces,  on the one hand, to \(\#\mathcal{C}_j \lesssim ( \lambda \delta^{-1})^{\frac{d-1}{2}}\) for \(2^j\sim 1\), and on the other, to $\# \mathcal{C}_j \lesssim (\delta 2^j)^{-d}$ for $\delta 2^j \gg (\sqrt{\lambda\delta})^{-1}$.

	If \(\delta > \lambda^{-\frac{d-1}{d+1}}\) then the theorem shows that for some constant \(C_d>0\), all but \(\frac{C_d}{\delta (\sqrt{\lambda\delta})^{d-1}}\%\) of the caps \(\theta\) satisfy \(N_\theta \lesssim\delta (\sqrt{\lambda\delta})^{d-1}\), and interpolates between this bound and \eqref{eqn:C-max}. Inspecting the proof, we could strengthen the former bound: if  \(\delta > \lambda^{-\frac{d-1}{d+1}}\) then all but \(\frac{C_d}{\delta (\sqrt{\lambda\delta})^{d-1}}\%\) of the caps \(\theta\) contain a fundamental region for \(\Z^d\). 
	
	If \(\delta < \lambda^{-\frac{d-1}{d+1}}\)  then the theorem shows that all but \((C'_d\sqrt{\lambda}(\sqrt{\delta})^{\frac{d+1}{d-1}})\%\) of the caps \(\theta\) satisfy \(N_\theta \lesssim 1\), and interpolates between this bound and \eqref{eqn:C-zero}. Again by inspecting the proof we could strengthen first part: if  \(\delta > \lambda^{-\frac{d-1}{d+1}}\) then all but \((C'_d\sqrt{\lambda}(\sqrt{\delta})^{\frac{d+1}{d-1}})\%\) of the caps \(\theta\) satisfy \(N_\theta \leq 1\). In this regime most caps should have \(N_\theta=0\).

\begin{proof}[Proof of Theorem~\ref{thmcaps}]
Let \(\theta\) be any cap. Let \(R_\theta\) be a rectangular box, centred at the origin, containing \(\theta-\theta\) and having dimensions $\sim \delta \times \sqrt{\lambda \delta} \times \dots \times \sqrt{\lambda \delta}$.

Define a norm \(|\,\cdot\,|_\theta\) on \(\R^d\) by
\[
|\vec{x}|_\theta
= \inf\{r>0 : \vec{x}\in r R_\theta\},
\]
so that \(R_\theta\) is the unit ball in this norm, with
\begin{equation}\label{eqn:theta-norm}
|\,\cdot\,|_2\lesssim \sqrt{\lambda\delta}|\,\cdot\,|_\theta.
\end{equation}
Because \(R_\theta\) is contained in a slab of the form \(\{\vec{x}\in\R^d:\vec{n}_\theta\cdot \vec{x}\lesssim \delta\}\), we also  have
\begin{equation}\label{eqn:theta-norm-normal}
	\vec{n}_\theta \cdot \vec{x}
	\lesssim
	\delta |\vec{x}|_\theta.
\end{equation}

Morally, the idea of the proof is to fix the lattice generated by \(R_\theta\cap \Z^d\), and count the number of caps with a given lattice. Carrying out this programme in a literal fashion seems possible but technically complex. We take advantage of a trick: we will construct  a small integer vector \(\vec{v}\) which is orthogonal to all the integer vectors in \(R_\theta\cap \Z^d\) and approximately perpendicular to \(\vec{n}_\theta\), and it is this vector, rather than the  lattice itself, which we will fix. The outline of the proof is as follows: after some further definitions we set out some basic results from the geometry of numbers in Step~1 below; we then construct \(\vec{v}\) in Step~2 and complete the proof in Step~3.

Define \(r_\theta\) to be the dimension of the span (over \(\R\), say) of the vectors in \(R_\theta\cap \Z^d\). The hypotheses of the theorem imply that we cannot have \(r_\theta =d\), since then \(N_\theta\lesssim \delta (\sqrt{\lambda/\delta})^{d-1}\). 


\subsubsection*{Step 1} We show that  there is a basis \(\vecs{1}{x},\dotsc,\vecs{d}{x}\) of \(\Z^d\) with
\begin{equation}\label{eqn:step1}
\delta (\sqrt{\delta\lambda})^{d-1}
\sim
\prod_{i=1}^{d} |\vecs{i}{x}|_\theta^{-1},
\qquad
\#(\Z^d\cap R_\theta)
\sim
\prod_{i=1}^{r_\theta} |\vecs{i}{x}|_\theta^{-1}.
\end{equation}
This is more or less a standard result from the geometry of numbers.

In this step only, let \(A\) be the matrix with \(AR_\theta=[-1,1]^d\), and for \(1\leq i \leq d\) let \(M_i\) be minimal such that there are \(i\) linearly independent  vectors \(\vec{x}\in\Z^d\)  with \(|\vec{x}|_\theta\leq M_i\), or equivalently there are \(i\) linearly independent vectors \(\vec{y}\in A\Z^d\) with \(|A\vec{y}|_\infty \leq M_i\). In particular
\[
M_{r_\theta}\leq 1<M_{r_\theta+1}.
\]
By Theorem~V in \S VIII.4.3 of Cassels~\cite{casselsIntroduction} we have
\begin{equation}\label{eqn:minkowski}
\prod_{i=1}^d M_i
\sim \det A
\sim \frac{1}{\delta(\sqrt{\delta\lambda})^{d-1}}.
\end{equation}
Also by Lemma~2 in \S VIII.1.2 of Cassels   there is a basis  \(\vecs{1}{y},\dotsc,\vecs{d}{y}\) of \(A\Z^d\) such that
\begin{equation}\label{eqn:nice-basis}
M_{i}\leq|\vec{y}|_\infty < M_{i+1},\vec{y}\in A\Z^d
\implies
\vec{y} \in \Z \vecs{1}{y}+\dotsb+\Z \vecs{i}{y}.
\end{equation}
Let \(\vecs{i}{z}\) be the dual basis, so that
\[
\vec{y}
= (\vec{y}\cdot\vecs{1}{z} )\vecs{1}{y}+\dotsb
+(\vec{y}\cdot\vecs{d}{z} )\vecs{d}{y}
\]
for all \(\vec{y}\in A\Z^d\). Note that we must have \(
M_i\leq
|\vecs{i}{y}|_\infty\) from the definition of \(M_i\). By the Corollary to Theorem VIII in \S VIII.5.2 of Cassels, we may choose the \(\vecs{i}{y}\) such that
\begin{equation}\label{eqn:minkowski-basis}
M_i\leq
|\vecs{i}{y}|_\infty
\leq \max\{1,i/2\} M_i,
\qquad
|\vecs{i}{z}|
\leq
(\tfrac{1}{2})^{n-1}(n!)^2/
|\vecs{i}{y}|_\infty.
\end{equation}
We now let \(\vecs{i}{x}=A^{-1}\vecs{i}{y}\) be our basis of \(\Z^d\). We then have
\begin{equation}\label{eqn:basis-norms}
|\vecs{i}{x}|_\theta
=|\vecs{i}{y}|_\infty\sim M_i^{-1},
\end{equation}
by \eqref{eqn:minkowski-basis}. Now \eqref{eqn:basis-norms} and \eqref{eqn:minkowski} together imply the first part of \eqref{eqn:step1}. It also follows from \eqref{eqn:minkowski-basis} that for some \(c>0\) depending only on \(d\) we have
\begin{align*}
|c_i|\leq c M_i^{-1}\;(1\leq i\leq r_\theta)
&\implies
|c_1\vecs{1}{x}+\dotsb+c_{r_\theta} \vecs{r_\theta}{x}|\leq 1,
\intertext{and combining \eqref{eqn:nice-basis} with \eqref{eqn:minkowski-basis} shows that there is a constant \(C>0\) depending only on \(d\) such that}
|c_1\vecs{1}{x}+\dotsb+c_d \vecs{d}{x}|\leq 1
&\implies
|c_i|\leq CM_i^{-1}\;(1\leq i\leq r_\theta),\;
c_i=0\;(i>r_\theta ).
\end{align*}
The last two displays yield
\begin{gather*}
	\#(\Z^d\cap R_\theta)
	=
	\#([-1,1]^d\cap A\Z^d)
	\sim
	\frac{ 1}{\prod_{i=1}^{r_\theta} M_i}
	\sim\frac{ 1}{\prod_{i=1}^{r_\theta} |\vecs{i}{y}|_\infty},
\end{gather*}
and the last part of \eqref{eqn:step1} follows by \eqref{eqn:basis-norms}.

\subsubsection*{Step 2} 
We claim that if \(1\leq r_\theta<d\) and \(\vec{n}_\theta\) is the normal vector to \(\theta\) then there is \(\vec{v}\in\Z^d\setminus\{\vec{0}\}\) such that

\[
|\vec{v}|_2
\lesssim
\delta^{-1}
\left(
\frac{
	\delta(\sqrt{\lambda\delta})^{d-1}
}{
	\#(\Z^d\cap R_\theta)
}
\right)^{
	\frac{1}{d-r_\theta},
}
\qquad
\left| \vec{n}_\theta- \vec{v}/|\vec{v}|_2 \right| \lesssim 
(\sqrt{\lambda\delta})^{-1}
|\vec{v}|_2 ^{-1}
\left(
\frac{
	\delta(\sqrt{\lambda\delta})^{d-1}
}{
	\#(\Z^d\cap R_\theta)
}
\right)^{
	\frac{1}{d-r_\theta}
}.
\]

For the proof, let \(\vecs{i}{x}\) be as in Step~1. Recall the notation  \((\vecs{1}{v}|\cdots|\vecs{k}{v})\) for the matrix with columns \(\vecs{i}{v}\).
 We let
\[
\vec{v}
= \vecs{1}{x}\wedge\dotsb\wedge \vecs{d-1}{x}.
\]
By \eqref{eqn:theta-norm-normal}
we have
\begin{equation}\label{eqn:dirn-of-normal-I}
	\bigg\lvert
	\vec{n}_\theta^T\left(\frac{\vecs{1}{x}}{|\vecs{1}{x}|_\theta} \,\middle|\, \cdots \,\middle|\, \frac{\vecs{d-1}{x}}{|\vecs{d-1}{x}|_\theta}\right) \bigg\rvert_\infty
	\lesssim \delta.
\end{equation}
Now every \(\vecs{i}{x}\) satisfies \(   \frac{\lvert \vecs{1}{x}\rvert_\infty }{|\vecs{1}{x}|_\theta}\lesssim \sqrt{\lambda\delta}\) by \eqref{eqn:theta-norm}. Therefore
\begin{equation}\label{eqn:dirn-of-normal-II}
	\left(\frac{\vecs{1}{x}}{|\vecs{1}{x}|_\theta} \,\middle|\, \cdots \,\middle|\, \frac{\vecs{d-1}{x}}{|\vecs{d-1}{x}|_\theta}
	\right) = U
	\begin{pmatrix}
		\operatorname{diag}(d_1,\dotsc,d_{d-1})
		\\O_{1\times (d-1)}
	\end{pmatrix}
	V,
\end{equation}
where  \(O\) is a matrix of zeroes, \(U,V\) are orthogonal, and \(d_i\lesssim \sqrt{\lambda\delta}\). It follows that
\begin{equation}\label{eqn:size-of-v}
|\vec{v}|_2
\sim
\prod_{i=1}^{d-1}
d_i |\vecs{i}{x}|_\theta
\lesssim
\delta^{-1}|\vecs{d}{x}|^{-1}_\theta.
\end{equation}
Now \eqref{eqn:dirn-of-normal-I} and \eqref{eqn:dirn-of-normal-II} together yield
\[
\bigg\lvert\vec{n}_\theta^T U
\left(\begin{smallmatrix}
	1\vspace{-.5em}\\&\ddots\\&&1\\&&&0
\end{smallmatrix}\right)\bigg\rvert_\infty
\lesssim 
\delta
(\min_i |d_i|)^{-1}.
\]
For any vector with \(|\vec{m}|_2=	1\) we have
\[
|m_d|-1
\lesssim
\bigg\lvert\vec{m}^T
\left(\begin{smallmatrix}
	1\vspace{-.5em}\\&\ddots\\&&1\\&&&0
\end{smallmatrix}\right)\bigg\rvert_2^2,
\]
and so
\[
\min\{
|\vec{m}-\vecs{d}{e}|_2,
|\vec{m}+\vecs{d}{e}|_2
\}
\lesssim\bigg\lvert\vec{m}^T
\left(\begin{smallmatrix}
	1\vspace{-.5em}\\&\ddots\\&&1\\&&&0
\end{smallmatrix}\right)\bigg\rvert_\infty.
\]
It follows that for some choice of sign,
\[
|\vec{n}_\theta\pm U\vecs{d}{e}|_2
=
|U^T\vec{n}_\theta\pm \vecs{d}{e}|_2
\lesssim 
\delta (\min_i |d_i|)^{-1}.
\]
By our definititon \(\vec{v}= \vecs{1}{x}\wedge\dotsb\wedge \vecs{d-1}{x}\) we have
\begin{gather*}
	\vec{v}^T
\left(\frac{\vecs{1}{x}}{|\vecs{1}{x}|_\theta} \,\middle|\, \cdots \,\middle|\, \frac{\vecs{d-1}{x}}{|\vecs{d-1}{x}|_\theta}
\right)
=(0,\dotsc,0),
\end{gather*}
so that \(U\vecs{d}{e}=\vec{v}/|\vec{v}|_2\).  
It follows that, possibly after replacing \(\vec{v}\) with \(-\vec{v}\) if necessary, we have
\begin{equation}
	\label{eqn:approx_by_v}
\left| \vec{n}_\theta- \vec{v}/|\vec{v}|_2 \right| \lesssim 
\frac{
	\delta(\sqrt{\lambda\delta})^{d-2}
	\prod_{i=1}^{d-1} |\vecs{i}{x}|_\theta
}{
|\vec{v}|_2 
}
\sim
\frac{
	1
}{
	\sqrt{\lambda\delta}
	|\vecs{d}{x}|_\theta
	|\vec{v}|_2 
},
\end{equation}
where the last part follows by Step~1. It remains to observe that, again by Step~1, we have
\[
|\vecs{d}{x}|_\theta^{d-r_\theta}
\geq
\prod_{i=r_\theta+1}^{d}
|\vecs{i}{x}|_\theta
\sim
\frac{\#(\Z^d\cap R_\theta)}{\delta (\sqrt{\delta\lambda})^{d-1}},
\]
\textcolor{black}{and the result follows from this bound together with \eqref{eqn:size-of-v} and \eqref{eqn:approx_by_v}.}

\subsubsection*{Step 3}

By Step~2, the number of \(\theta \in \mathcal{C}\) with \(r_\theta=r\) and \(\#(\Z^d\cap R_\theta) \sim 2^J\delta (\sqrt{\delta\lambda})^{d-1}\) is 
\[
\lesssim
\sum_{ \substack{\vec{v}\in \Z^d\setminus\{\vec{0}\}\\ |\vec{v}|_2 \leq 
\delta^{-1} 2^{
	{-J/(d-r)}
}}}
\left(1+
\delta^{-1}
|\vec{v}|_2 ^{-1}
2^{-J/(d-r)
}
\right)^{d-1}
\lesssim
\left(
\delta 2^{J/(d-r)}
\right)^{-d}.
\]

Let \(2^j\) be as in the theorem and suppose \(\theta\in \mathcal{C}_j\). Then \[\#(\Z^d\cap R_\theta)\geq N_\theta >2^{j-1}\delta (\sqrt{\lambda\delta})^{d-1},\] and it follows that \(\#(\Z^d\cap R_\theta)>{\color{black}{\frac{1}{2}}}K(\sqrt{\delta\lambda})^{d-k-1}\).

\textcolor{black}{Recall \(R_\theta\cap \Z^d\) is contained in a box of size $\sim \delta \times \sqrt{\lambda \delta} \times \dots \times \sqrt{\lambda \delta}$, intersected with a linear space of dimension \(r_\theta<d\), and so
		\begin{equation*}
				\label{eqn:triv_upper_bound}
				\#(\Z^d\cap R_\theta)
				\ll
				\sqrt{\delta\lambda}^{r_\theta}.
			\end{equation*}
Hence \(r_\theta\geq d-k\) since \(K\) may be taken arbitrarily large. So the number of possible  \(\theta \) is \(\lesssim  (\delta 2^{j/k})^{-d}\).}
\end{proof}

\section{Application of the $\ell^2$ decoupling theorem}

We decompose $P_{\lambda,\delta}$ into projectors $P_{\lambda,\delta}^j$ which are supported on the union of caps in $\mathcal{C}_j$. To be more specific, we choose a partition of unity $(\chi_\theta)$ adapted to the caps defined in the previous section:
$$
\operatorname{Supp} \chi_\theta \subset \theta \qquad \mbox{and} \qquad \sum_\theta \chi_\theta = 1 \quad \mbox{on $S_{\lambda, \delta}$},
$$
and let
\begin{align*}
& \chi_j = \sum_{\theta \in \mathcal{C}_j} \chi_\theta \\
& P_{\lambda,\delta}^j = P_{\lambda,\delta}\, \chi_j(D)
\end{align*}
(where $\chi_j(D)$ is the Fourier multiplier with symbol $\chi_j(k)$).
Using the $\ell^2$ decoupling theorem of Bourgain and Demeter~\cite{BourgainDemeter3}, we can estimate the operator norm of $P_{\lambda,\delta}^j$ from $L^2$ to $L^{p}$:

\begin{prop} \label{propj}
For any $Q$, for any $\epsilon>0$, for $p \geq p_{ST}$, and for $\delta > \lambda^{-1}$,
$$
\| P_{\lambda,\delta}^j \|_{L^{2} \to L^{p}} \lesssim_\epsilon \lambda^{\frac{\sigma(p)}{2} +\epsilon} \delta^{1/2} 2^{j \left( \frac{1}{2} - \frac{1}{p} \right)}.
$$
\end{prop}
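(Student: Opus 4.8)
The plan is to apply the $\ell^2$ decoupling theorem of Bourgain--Demeter to the family of caps $\{\theta\}$ covering the shell $S_{\lambda,\delta}$, and then use the cardinality bound on $\mathcal{C}_j$ together with the $L^2$-mass distribution to close the estimate. First I would recall the geometry: each cap $\theta$ fits in a box of dimensions $\sim \delta \times \sqrt{\lambda\delta} \times \dots \times \sqrt{\lambda\delta}$. After rescaling $\xi \mapsto \xi/\lambda$, the shell $S_{\lambda,\delta}$ becomes a $\delta/\lambda$-neighbourhood of the ellipsoid $\{Q(\xi)=1\}$, and the caps become plates of dimension $\sqrt{\delta/\lambda} \times \dots \times \sqrt{\delta/\lambda}$ in the $d-1$ tangential directions and $\delta/\lambda$ in the normal direction. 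Setting $\nu = \sqrt{\delta/\lambda}$, these are exactly the $\nu$-caps for which the decoupling theorem gives, for $p \geq p_{ST}$,
$$
\| P_{\lambda,\delta} f \|_{L^p(\mathbb{T}^d)} \lesssim_\epsilon \lambda^\epsilon \left( \sum_\theta \| P_\theta f \|_{L^p}^2 \right)^{1/2},
$$
where $P_\theta$ is the Fourier projection onto $\theta$ (the periodicity and the discreteness of the frequencies cause no trouble — one localizes in physical space to a ball of radius $\sim (\delta/\lambda)^{-1}$ and applies the Euclidean decoupling theorem, or invokes the discrete version directly). This step is essentially a black-box application once the cap geometry is matched to the hypotheses of the theorem.

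Next I would restrict attention to $f$ with $\widehat{f}$ supported on $\bigcup_{\theta \in \mathcal{C}_j}\theta$, so only $\theta \in \mathcal{C}_j$ contribute. For each such cap I bound $\| P_\theta f \|_{L^p}$ by interpolating between the trivial $L^2$ bound and an $L^\infty$ bound obtained by Bernstein / Hausdorff--Young on the box: since $\theta \cap \mathbb{Z}^d$ has $N_\theta \sim 2^j \delta(\sqrt{\lambda\delta})^{d-1}$ points and $\theta$ sits in a box of volume $\sim \delta(\sqrt{\lambda\delta})^{d-1}$, Bernstein gives
$$
\| P_\theta f \|_{L^p} \lesssim \big(\delta(\sqrt{\lambda\delta})^{d-1}\big)^{\frac12 - \frac1p} \| P_\theta f \|_{L^2}.
$$
Plugging this into the decoupling inequality and using $\sum_\theta \| P_\theta f \|_{L^2}^2 = \| f \|_{L^2}^2$ (orthogonality, as the caps are almost disjoint), one obtains
$$
\| P_{\lambda,\delta}^j f \|_{L^p} \lesssim_\epsilon \lambda^\epsilon \big(\delta(\sqrt{\lambda\delta})^{d-1}\big)^{\frac12-\frac1p}\|f\|_{L^2}.
$$
A short arithmetic simplification of $\big(\delta(\sqrt{\lambda\delta})^{d-1}\big)^{\frac12-\frac1p} 2^{j(\frac12-\frac1p)} / 2^{j(\frac12-\frac1p)}$ then needs to be matched against the claimed $\lambda^{\sigma(p)/2}\delta^{1/2}2^{j(\frac12-\frac1p)}$; one checks that $\big(\delta(\sqrt{\lambda\delta})^{d-1}\big)^{\frac12-\frac1p} = \lambda^{\frac{d-1}{2}(\frac12-\frac1p)}\delta^{\frac{d+1}{2}(\frac12-\frac1p)}$, which is not quite the target, so in fact one should \emph{not} use Bernstein naively per cap but instead combine the per-cap Bernstein bound with the count $\#\mathcal{C}_j$.

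The sharper route, which I expect is the intended one, is: after decoupling, write $\| P_\theta f\|_{L^p} \lesssim \|P_\theta f\|_{L^\infty}^{1-2/p}\|P_\theta f\|_{L^2}^{2/p}$ and $\|P_\theta f\|_{L^\infty} \le \sqrt{N_\theta}\,\|P_\theta f\|_{L^2}$ (Cauchy--Schwarz over the $\le N_\theta$ frequencies), whence $\|P_\theta f\|_{L^p} \lesssim N_\theta^{\frac12-\frac1p}\|P_\theta f\|_{L^2} \lesssim \big(2^j\delta(\sqrt{\lambda\delta})^{d-1}\big)^{\frac12-\frac1p}\|P_\theta f\|_{L^2}$. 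Summing the squares and using orthogonality gives the factor $\big(2^j\delta(\sqrt{\lambda\delta})^{d-1}\big)^{\frac12-\frac1p}$, and expanding $\delta(\sqrt{\lambda\delta})^{d-1} = \lambda^{\frac{d-1}{2}}\delta^{\frac{d+1}{2}}$ one still must reconcile this with the stated bound $\lambda^{\sigma(p)/2}\delta^{1/2}2^{j(\frac12-\frac1p)}$. Since $\sigma(p)=d-1-\frac{2d}{p}$ one has $\lambda^{\sigma(p)/2}\delta^{1/2}2^{j(\frac12-\frac1p)}$ versus $\lambda^{\frac{d-1}{2}(\frac12-\frac1p)}\delta^{\frac{d+1}{2}(\frac12-\frac1p)}2^{j(\frac12-\frac1p)}$; these agree precisely when $\delta = \lambda^{-\frac{d-1}{d+1}}$ and otherwise the claimed bound is the correct one obtained by instead summing $\#\mathcal{C}_j$ copies of the $L^p$-box estimate rather than using $L^2$-orthogonality — i.e.\ estimating $\|P^j_{\lambda,\delta}f\|_{L^p}^2 \lesssim \#\mathcal{C}_j \cdot \max_\theta \|P_\theta f\|_{L^p}^2$ is wasteful, so the right bookkeeping is to track both $\#\mathcal{C}_j$ from Theorem~\ref{thmcaps} and the per-cap $L^2$ masses, optimizing via Hölder in the decoupling sum. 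I expect the \textbf{main obstacle} to be exactly this bookkeeping: correctly combining the decoupling $\ell^2$-sum over caps, the Bernstein loss $N_\theta^{1/2-1/p}$ on each cap, the orthogonality relation $\sum\|P_\theta f\|_2^2=\|f\|_2^2$, and — where it genuinely improves things — the cap count $\#\mathcal{C}_j \lesssim (2^{j/k}\delta)^{-d}$, so as to land exactly on $\lambda^{\sigma(p)/2}\delta^{1/2}2^{j(1/2-1/p)}$ for all $\delta>\lambda^{-1}$ uniformly; the analytic inputs (decoupling, Bernstein, orthogonality) are each routine on their own.
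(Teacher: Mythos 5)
Your decoupling display is where the argument breaks. For $p\ge p_{ST}$ the Bourgain--Demeter $\ell^2$ decoupling constant for a $\nu$-neighbourhood of the sphere decoupled into $\sqrt\nu$-caps (here $\nu=\delta/\lambda$) is $\nu^{-(\frac{d-1}{4}-\frac{d+1}{2p})-\epsilon}$, not $\nu^{-\epsilon}$; the loss-free form you wrote holds only for $2\le p\le p_{ST}$. With your (otherwise correct) per-cap step $\|P_\theta f\|_{L^p}\lesssim N_\theta^{\frac12-\frac1p}\|P_\theta f\|_{L^2}$, $N_\theta\lesssim 2^j\delta(\sqrt{\lambda\delta})^{d-1}$, and almost orthogonality, your version of decoupling would give $\|P^j_{\lambda,\delta}\|_{L^2\to L^p}\lesssim_\epsilon \lambda^\epsilon\bigl(2^j\lambda^{\frac{d-1}{2}}\delta^{\frac{d+1}{2}}\bigr)^{\frac12-\frac1p}$, which for $p>p_{ST}$ is \emph{strictly stronger} than the Proposition and is in fact false: summing over $j\lesssim\log(\delta^{-1})$ it would yield $\|P_{\lambda,\delta}\|_{L^2\to L^p}\lesssim_\epsilon\lambda^\epsilon(\lambda\delta)^{\frac{d-1}{2}(\frac12-\frac1p)}$ for all $p\ge p_{ST}$, contradicting the radial lower bound of Lemma~\ref{llb2} (e.g.\ $p=\infty$, $\delta\sim 1$). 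The ``mismatch'' you detected is exactly the omitted decoupling power: one has the identity $(\lambda/\delta)^{\frac{d-1}{4}-\frac{d+1}{2p}}\cdot\bigl(2^j\lambda^{\frac{d-1}{2}}\delta^{\frac{d+1}{2}}\bigr)^{\frac12-\frac1p}=\lambda^{\sigma(p)/2}\delta^{1/2}2^{j(\frac12-\frac1p)}$, so your ``sharper route'' with the \emph{correct} decoupling constant lands precisely on the claimed bound. That is the paper's proof: rescale by $\lambda$, insert a physical-space cutoff $\phi(\delta X/\lambda)$ with compactly supported Fourier transform so that each lattice point of a cap contributes a ball of radius $\delta/\lambda$ (hence per-cap Fourier support of measure $\lesssim N_\theta(\delta/\lambda)^d$), apply decoupling with its full constant, use $\|g\|_{L^p}\lesssim|\operatorname{Supp}\widehat g|^{\frac12-\frac1p}\|g\|_{L^2}$ on each cap, and resum by almost orthogonality.

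Your proposed repair --- folding in $\#\mathcal{C}_j$ from Theorem~\ref{thmcaps} and ``optimizing via H\"older in the decoupling sum'' --- is a wrong turn, and in any case you leave it as an unresolved ``main obstacle,'' so the proof is not closed. The cap-counting theorem plays no role in this Proposition: in the paper it enters only later, in the proof of Theorem~\ref{thm:main}, through the $L^2\to L^\infty$ bound on $P^j_{\lambda,\delta}$; and it could not be used here, since the Proposition is asserted for all $\delta>\lambda^{-1}$ while Theorem~\ref{thmcaps} is only effective for $\delta\gtrsim\lambda^{-\frac{d-1}{d+1}}$. The fix is not extra bookkeeping with $\#\mathcal{C}_j$ but simply restoring the decoupling constant $(\lambda/\delta)^{\frac{d-1}{4}-\frac{d+1}{2p}}$; the fact that your intermediate bound came out \emph{stronger} than the statement should have flagged the overstated decoupling step.
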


\begin{proof} For simplicity in the notation, we only consider the case $Q = \operatorname{Id}$. 
	Let $a_k$  be an arbitrary sequence in $\ell^2(\mathbb{Z}^d)$, or in other words the Fourier series associated to an arbitrary function in $L^2(\mathbb{T}^d)$.
	Changing variables to $X = \lambda x$ and $K = k/ \lambda$, and taking advantage of the periodicity of Fourier series, we get
\begin{align*}
& \left\| \sum_{k \in \mathbb{Z}^d} \chi_j(k) \chi\left( \frac{|k|-\lambda}{\delta} \right) a_k e^{2\pi i k\cdot x} \right\|_{L^p(\mathbb{T}^d)} \\
& \qquad \qquad \lesssim
\left( \frac{\delta}{\lambda} \right)^{d/p} \left\| \phi \left( \frac{\delta X}{\lambda} \right) \sum_{K \in \mathbb{Z}^d/\lambda} \chi_j(\lambda K)\chi\left( \frac{|K|-1}{(\delta/\lambda)} \right) a_{\lambda K} e^{2\pi i K \cdot X} \right\|_{L^{p}(\mathbb{R}^d)},
\end{align*}
where the cutoff function $\phi$ can be chosen to have compactly supported Fourier transform. As a result, the Fourier transform of the function on the right-hand side is supported on a $\delta/\lambda$-neighborhood of $\mathbb{S}^{d-1}$.
Using the $\ell^2$ decoupling theorem of Bourgain and Demeter
\footnote{The theorem, as stated in that paper, does not immediately apply to our setup. The following procedure can be applied: first, restrict to a coordinate patch on the sphere. Second, split our caps $\theta$ into a finite number of subcollections $\mathcal{S}_j$, with the following property: for each $j$, there exists a covering $\mathcal{P}_j$ as in Bourgain-Demeter such that any $\theta \in \mathcal{S}_j$ is contained in one element in $\mathcal{P}_j$. Third, sum over $j$ to obtain the result. Alternatively, one can resort to the version in Tao ~\cite{Tao}, Exercise 26.},
this is
$$
\dots \lesssim_\epsilon \left( \frac{\delta}{\lambda} \right)^{\frac{d}{p}- \frac{d-1}{4} + \frac{d+1}{2p} - \epsilon} \left( \sum_{\theta \in \mathcal{C}_j} \left\|  \phi \left( \frac{\delta X}{\lambda} \right)  \sum_{K \in \mathbb{Z}^d / \lambda} \chi_\theta(\lambda K) \chi \left( \frac{|K|-1}{(\delta/\lambda)} \right) a_{\lambda K} e^{2\pi i K \cdot X}\right\|_{L^p(\mathbb{R}^d)}^2  \right)^{1/2}
$$
(notice that $\theta/\lambda$ has dimensions  $\sim \frac{\delta}{\lambda} \times \frac{\delta^{1/2}}{\lambda^{1/2}} \dots \times \frac{\delta^{1/2}}{\lambda^{1/2}}$). At this point, we use the inequality
$$
\mbox{if $p \geq 2$}, \qquad \| f \|_{L^p(\mathbb{R}^d)} \lesssim \| f \|_{L^2} | \operatorname{Supp} \widehat{f} |^{\frac{1}{2} - \frac{1}{p}},
$$
which follows by applying successively the Hausdorff-Young and H\"older inequalities, and finally the Plancherel equality. We use this inequality for $f =  \phi \left( \frac{\delta X}{\lambda} \right)  \sum_{K} \chi \left( \frac{|K|-1}{(\delta/\lambda)} \right) \chi_\theta(\lambda K) a_{\lambda K} e^{2\pi i K \cdot X}$. Since $\theta \in \mathcal{C}_j$, its Fourier transform is supported on the union of at most $O((\delta \lambda)^{\frac{d-1}{2}}\delta 2^j)$ balls of radius $O( \delta / \lambda) $, giving $| \operatorname{Supp} \widehat{f} | \lesssim \delta^{\frac{3d+1}{2}} \lambda^{-\frac{d+1}{2}} 2^j$. Coming back to the quantity we want to bound, it is
\begin{align*}
 & \lesssim  \left( \frac{\delta}{\lambda} \right)^{\frac{d}{p}- \frac{d-1}{4} + \frac{d+1}{2p} - \epsilon} \left(\delta^{\frac{3d+1}{2}} \lambda^{-\frac{d+1}{2}} 2^j \right)^{\frac{1}{2} - \frac{1}{p}} \\ & \qquad \qquad \qquad \qquad \left( \sum_{\theta \in \mathcal{C}_j} \left\|  \phi \left( \frac{\delta X}{\lambda} \right)  \sum_{K \in \mathbb{Z}^d / \lambda} \chi_\theta(\lambda K) \chi \left( \frac{|K|-1}{(\delta/\lambda)} \right) a_{\lambda K} e^{2\pi i K \cdot X}\right\|_{L^2(\mathbb{R}^d)}^2  \right)^{1/2} \\
& \lesssim \left( \frac{\delta}{\lambda} \right)^{\frac{d}{p} - \frac{d-1}{4} + \frac{d+1}{2p} - \epsilon}\left( \delta^{\frac{3d+1}{2}} \lambda^{-\frac{d+1}{2}} 2^j\right)^{\frac{1}{2} - \frac{1}{p}} \left\|  \phi \left( \frac{\delta X}{\lambda} \right) \sum_{K \in \mathbb{Z}^d/\lambda} \chi_j(\lambda K)\chi\left( \frac{|K|-1}{(\delta/\lambda)} \right) a_{\lambda K} e^{2\pi i K \cdot X} \right\|_{L^2(\mathbb{R}^d)},
\end{align*}
where the last inequality is a consequence of almost orthogonality. Finally undoing the change of variables, this is 
\begin{align*}
& \lesssim \left( \frac{\delta}{\lambda} \right)^{\frac{d}{p}-\frac{d}{2} - \frac{d-1}{4} + \frac{d+1}{2p} - \epsilon}
\left( \delta^{\frac{3d+1}{2}} \lambda^{-\frac{d+1}{2}} 2^j \right)^{\frac{1}{2} - \frac{1}{p}}
 \left\| \sum_{k \in \mathbb{Z}^d} \chi_j(k) \chi\left( \frac{|k|-\lambda}{\delta} \right) a_k e^{2\pi i k\cdot x} \right\|_{L^2(\mathbb{T}^d)} \\
& \leq  \left( \frac{\delta}{\lambda} \right)^{-\epsilon} \lambda^{\sigma(p)/2} \delta^{1/2} 2^{j \left( \frac{1}{2} - \frac{1}{p} \right)} \left\| \sum_{k \in \mathbb{Z}^d} a_k e^{2\pi i k\cdot x} \right\|_{L^2(\mathbb{T}^d)}.
\end{align*}
\end{proof}

\section{Proof of the main theorems}

\subsection{The case $p<p_{ST}$: proof of Theorem~\ref{thmpST}} Proposition~\ref{propj} gives the bounds
$$
\| P_{\lambda,\delta} \|_{L^2 \to L^{p_{ST}}} \leq \sum_{j=0}^\infty \| P_{\lambda,\delta}^j \|_{L^2 \to L^{p_{ST}}} \lesssim \lambda^{\epsilon} (\delta \lambda)^{\frac{d-1}{2(d+1)}}.
$$
Interpolating with the trivial $L^2 \to L^2$ bound, this gives the conjecture for $2 \leq p \leq p_{ST}$.

\subsection{An exact but involved statement}

The theorems in the introduction are deduced from the following result. \textcolor{black}{The first bound \eqref{eqn:full_bound} in this next theorem is precisely the result of  interpolating between the bounds obtained above.} The last part of the theorem allows for the concise result in Theorem~\ref{thmsimple} but, as we will see in the proof, it is slightly weaker.

\begin{thm}\label{thm:main}	\textcolor{black}{
	Assume $p_{ST} \leq p \leq \infty$ and write
	\begin{align}\label{eqn:excess-of-p}
		\alpha(p)
		&=1-\frac{2}{p},
		&
		\beta(p)
		&=
		1- \frac{p_{ST}}{p}.
	\end{align}
	Assume further that $\delta> \lambda^{-\frac{d-1}{d+1}}$. Then
\begin{multline}\label{eqn:full_bound}
	\| P_{\lambda,\delta} \|_{L^2 \to L^p}
	\lesssim \lambda^{\sigma(p)/2} \delta^{1/2} + 
	\lambda^\epsilon
	(\lambda\delta)^{\frac{d}{4}\alpha(p)}
	( \lambda/\delta)^{\frac{d}{4}\beta(p)}
	\sum_{\substack{ 1\leq k\leq d-1 \\ (\delta \lambda)^{(k-1)/2} \delta<1 }}
	(\delta \lambda)^{-\frac{k}{4}\alpha(p)}
	( \lambda/\delta)^{\frac{-1}{2k}\frac{d}{2}\beta(p)}
%
\\
+
\lambda^\epsilon
(\lambda\delta)^{\frac{d}{4}\alpha({p})}(\lambda/\delta)^{-\frac{1}{4}\alpha({p})} \delta^{-\frac{d}{2}\beta({p})}.
\end{multline}
	and it follows that
	\begin{multline*}
	\| P_{\lambda,\delta} \|_{L^2 \to L^p}
	\lesssim 
	\lambda^{\sigma(p)/2} \delta^{1/2} 
	+ 
	\lambda^\epsilon
	(\lambda\delta)^{\frac{d}{4}\alpha({p})}(\lambda/\delta)^{-\frac{1}{4}\alpha({p})} \delta^{-\frac{d}{2}\beta({p})}
	\\
	+ 
	\lambda^\epsilon
	(\lambda\delta)^{\frac{d}{4}\alpha(p)}
	( \lambda/\delta)^{\frac{d}{4}\beta(p)}
	e^{-\frac{1}{2}\sqrt{d\alpha(p)
			\beta(p)\log (\delta \lambda)\log ( \lambda/\delta)}},
	\end{multline*}
where the final term may be omitted if \(\delta > 
\lambda^{\frac{\alpha(p)-d\beta(p)}{\alpha(p)+d\beta(p)}}\).
}
\end{thm}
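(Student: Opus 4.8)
The plan is to estimate $\| P_{\lambda,\delta} \|_{L^2\to L^p}$ by splitting $P_{\lambda,\delta}=\sum_j P_{\lambda,\delta}^j$ and summing; since the band decomposition $\chi=\sum_j\chi_j$ on $S_{\lambda,\delta}$ involves only $O(\log\lambda)$ indices (as $1\le 2^j\lesssim\delta^{-1}\le\lambda$), this reduces matters to a bound on a single $\| P_{\lambda,\delta}^j \|_{L^2\to L^p}$ as a function of $j$. For that I would combine two complementary estimates and take their minimum band by band. The first is Proposition~\ref{propj}, $\| P_{\lambda,\delta}^j \|_{L^2\to L^p}\lesssim_\epsilon\lambda^{\sigma(p)/2+\epsilon}\delta^{1/2}2^{j(1/2-1/p)}$, which is geometrically increasing in $2^j$ and already packages $\ell^2$-decoupling together with per-cap Bernstein. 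The second is a crude global Bernstein bound: the set of frequencies carried by $P_{\lambda,\delta}^j$ has cardinality at most $\#\mathcal{C}_j\cdot\max_{\theta\in\mathcal{C}_j}N_\theta$, which by Theorem~\ref{thmcaps} is $\lesssim(2^{j/k}\delta)^{-d}(\lambda\delta)^{(d-1)/2}\delta 2^j$ for every admissible $k\in\{1,\dots,d-1\}$; interpolating the Cauchy--Schwarz bound $\| P_{\lambda,\delta}^j\|_{L^2\to L^\infty}\le(\#\text{frequencies})^{1/2}$ with $\| P_{\lambda,\delta}^j\|_{L^2\to L^2}\le 1$ then gives $\| P_{\lambda,\delta}^j \|_{L^2\to L^p}\lesssim\big[(2^{j/k}\delta)^{-d}(\lambda\delta)^{(d-1)/2}\delta 2^j\big]^{1/2-1/p}$, which is geometrically decreasing in $2^j$ since $k<d$.

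Then for each $j$ I would pick $k=k(j)$ as small as the admissibility hypothesis $(\sqrt{\delta\lambda})^k\delta 2^j>K$ of Theorem~\ref{thmcaps} allows (thereby minimising $\#\mathcal{C}_j$), take the minimum of the two bounds, and sum over $j$. Because one estimate is geometrically increasing and the other geometrically decreasing in $2^j$, while $\mathcal{C}_j$ is empty once $2^j\gg\delta^{-1}$ and Theorem~\ref{thmcaps} is available under the hypothesis $\delta>\lambda^{-(d-1)/(d+1)}$, the sum in $j$ concentrates on a bounded number of critical scales, and collecting them yields \eqref{eqn:full_bound}. Concretely, the first term $\lambda^{\sigma(p)/2}\delta^{1/2}$ comes from the low bands (in particular $\mathcal{C}_0$), where Proposition~\ref{propj} is used; the middle sum comes from the transition regime, where as $j$ increases $k(j)$ sweeps through $\{1,\dots,d-1\}$, each value of $k$ contributing the value at which the two bounds balance, and the side condition $(\delta\lambda)^{(k-1)/2}\delta<1$ encodes the requirement that, for that value of $k$, the balance scale lie in the admissible window $K<2^j<\delta^{-1}$; and the last term comes from the far boundary of that window (where $2^j$ saturates at $\delta^{-1}$, or equivalently the unconstrained optimal $k$ is pushed to the endpoint), which simplifies to the displayed expression.

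For the second, cleaner inequality I would analyse the $k$-sum directly. Writing $\alpha=\alpha(p)$, $\beta=\beta(p)$ as in \eqref{eqn:excess-of-p}, its $k$-th summand equals a $k$-independent prefactor times $\exp\!\big(-\tfrac14\alpha\log(\lambda\delta)\,k-\tfrac d4\beta\log(\lambda/\delta)\,k^{-1}\big)$, a unimodal function of $k>0$ with maximum at $k^\ast=\big(d\beta\log(\lambda/\delta)/(\alpha\log(\lambda\delta))\big)^{1/2}$ and maximal value $\exp\!\big(-\tfrac12\sqrt{d\alpha\beta\log(\lambda\delta)\log(\lambda/\delta)}\big)$; since there are only finitely many (dimension-dependently many) admissible $k$, the sum is at most a constant times its largest term, hence at most the prefactor times this maximal value, which is the exponential term in the statement. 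Finally one checks that the last term of \eqref{eqn:full_bound} is precisely the same prefactor times the value of that unimodal function at the parameter $k=d\beta/\alpha$, and that the condition $\delta>\lambda^{(\alpha-d\beta)/(\alpha+d\beta)}$ is equivalent to $k^\ast<d\beta/\alpha$; when it holds, that last term is dominated by the value at $k^\ast$ and so may be dropped.

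The step I expect to be the main obstacle is the bookkeeping in the second paragraph: aligning the admissibility window of Theorem~\ref{thmcaps} (both $2^j>K$ and $(\sqrt{\delta\lambda})^k\delta 2^j>K$) with the hypothesis $\delta>\lambda^{-(d-1)/(d+1)}$ and with the emptiness of $\mathcal{C}_j$ for $2^j\gg\delta^{-1}$, making the choice $k=k(j)$ and the ensuing geometric sums genuinely clean, and pinning down which boundary case produces the last term of \eqref{eqn:full_bound}. A secondary point to verify is that in the few-caps regime no estimate beats the global Bernstein bound, so that combining it with Proposition~\ref{propj} really is the best the method gives.
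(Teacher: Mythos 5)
There is a genuine gap, and it lies in how you combine the two ingredients band by band. For each band you take the \emph{minimum} of (i) Proposition~\ref{propj} at the final exponent $p$ and (ii) the cap-counting bound of Theorem~\ref{thmcaps} fed into a Cauchy--Schwarz $L^2\to L^\infty$ estimate interpolated against the trivial $L^2\to L^2$ bound. The paper instead interpolates the decoupling estimate \emph{at the critical exponent $p_{ST}$}, $\| P^j_{\lambda,\delta}\|_{L^2\to L^{p_{ST}}}\lesssim \lambda^{1/p_{ST}}\delta^{1/2}2^{j/(d+1)}$, directly against the cap-counting $L^\infty$ bound $\big(\#\mathcal{C}_j(\lambda\delta)^{\frac{d-1}{2}}\delta 2^j\big)^{1/2}$, i.e.\ the cap count enters as the $L^\infty$ endpoint of a single interpolation whose other endpoint is the $p_{ST}$ decoupling bound. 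Your two bounds are themselves chords of the log-convex function $1/p\mapsto\log\|P^j\|_{L^2\to L^p}$, but anchored at the wrong pairs of endpoints (decoupling at $p_{ST}$ paired with the per-cap-only $L^\infty$ bound, and the cap-count $L^\infty$ bound paired with the trivial $L^2$ bound); the minimum of these two chords at an intermediate $1/p$ is in general strictly larger, by a power of $\lambda$, than the chord joining the $p_{ST}$ decoupling bound to the cap-count $L^\infty$ bound. Concretely, take $d=3$, $p=8$ (so $\alpha=3/4$, $\beta=1/2$), $\delta=\lambda^{-2/5}$, and a band with $2^j\approx\lambda^{1/10}$ (where $k_0$ transitions from $2$ to $1$): the paper's interpolated per-band bound is $\approx\lambda^{0.34}$ and the whole right-hand side of \eqref{eqn:full_bound} is $\approx\lambda^{0.425}$, whereas your $\min(C_j,D_j)$ at that band is $\approx\lambda^{0.44}$--$\lambda^{0.46}$. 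So your scheme provably yields a bound strictly weaker than \eqref{eqn:full_bound}, and the theorem does not follow from it; no choice of $k(j)$ or bookkeeping of the window $K<2^j<K\delta^{-1}$ repairs this, because the loss occurs already in the per-band combination.

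Two smaller points. First, the first term $\lambda^{\sigma(p)/2}\delta^{1/2}$ of \eqref{eqn:full_bound} is obtained in the paper for the band $\mathcal{C}_0$ by interpolating the $p_{ST}$ decoupling bound with the trivial count $\|P^0\|_{L^2\to L^\infty}\lesssim(\lambda^{d-1}\delta)^{1/2}$, again at the $L^\infty$ endpoint, rather than by invoking Proposition~\ref{propj} at exponent $p$ as you do. Second, in the paper the $k$-sum in \eqref{eqn:full_bound} arises by evaluating the interpolated per-band bound at the transition scales $2^j=(\delta\lambda)^{-(k-1)/2}\delta^{-1}$ (the left endpoints of the ranges where $k_0=k$), not at scales where two competing bounds balance; your final-paragraph analysis of the $k$-sum (unimodality in $k$, the value $\exp\big(-\tfrac12\sqrt{d\alpha\beta\log(\lambda\delta)\log(\lambda/\delta)}\big)$, and the omission criterion) is essentially the paper's argument, except that the third term of \eqref{eqn:full_bound} is the summand at $k_1=\log(\lambda/\delta)/\log(\lambda\delta)$ rather than at $k=d\beta/\alpha$. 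But this part only becomes relevant once \eqref{eqn:full_bound} itself is established, which your per-band combination does not achieve.
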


In order to prove the theorem, we need a brief lemma.

\begin{lem}\label{lem:optimalk}
We introduce the notation $k_0$ for the optimal index in Theorem~\ref{thmcaps} given $\delta,\lambda, 2^j$. Namely, assume that $\delta> \lambda^{-\frac{d-1}{d+1}}$. Then, let $k_0 = k_0(\delta,\lambda,2^j)$ be the smallest $k\in\Z$ such that
	$$
	(\delta \lambda)^{k/2} \delta 2^j >K.
	$$
	If \(1< 2^j<K\delta^{-1}\) then \(k_0(\delta,\lambda,2^j)\in \{ 1 ,\dots,d-1\}\). 
\end{lem}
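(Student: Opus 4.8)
The plan is to establish the two inequalities $k_0 \geq 1$ and $k_0 \leq d-1$ separately, after first checking that $k_0$ is well-defined as an integer. First I would observe that $\delta > \lambda^{-\frac{d-1}{d+1}} > \lambda^{-1}$ forces $\delta\lambda > 1$, so the map $k \mapsto (\delta\lambda)^{k/2}\delta 2^j$ is strictly increasing in $k$, with limit $0$ as $k \to -\infty$ and $+\infty$ as $k \to +\infty$. Hence there is a well-defined smallest integer $k$ with $(\delta\lambda)^{k/2}\delta 2^j > K$, namely $k_0$, and to locate it in $\{1,\dots,d-1\}$ it suffices to show that the defining inequality fails at $k = 0$ and holds at $k = d-1$.

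For the lower bound $k_0 \geq 1$: at $k = 0$ the left-hand side of the defining inequality is $(\delta\lambda)^0 \delta 2^j = \delta 2^j$, and the hypothesis $2^j < K\delta^{-1}$ says precisely that $\delta 2^j < K$. So $k = 0$ does not satisfy the defining inequality, whence $k_0 \geq 1$.

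For the upper bound $k_0 \leq d-1$: at $k = d-1$ the left-hand side is $(\delta\lambda)^{(d-1)/2}\delta 2^j = \delta^{(d+1)/2}\lambda^{(d-1)/2}\, 2^j$. The threshold hypothesis $\delta > \lambda^{-\frac{d-1}{d+1}}$ gives $\delta^{(d+1)/2}\lambda^{(d-1)/2} > 1$, and — reading the threshold with a dimensional constant absorbed, so that in fact $\delta^{(d+1)/2}\lambda^{(d-1)/2} \geq K$ — the factor $2^j > 1$ then pushes the product above $K$. Thus $k = d-1$ does satisfy the defining inequality, so $k_0 \leq d-1$, completing the argument.

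I do not expect a genuine obstacle here: this is a short monotonicity-and-bookkeeping statement. The only delicate point is matching the constant $K$ from Theorem~\ref{thmcaps} against the exact constant in the threshold $\delta > \lambda^{-\frac{d-1}{d+1}}$; the cleanest way to handle it is to read that threshold throughout (here and wherever Theorem~\ref{thmcaps} is invoked) as $\delta \geq c_d\,\lambda^{-\frac{d-1}{d+1}}$ with $c_d$ chosen so that $c_d^{(d+1)/2} \geq K$, which is harmless since all subsequent estimates are stated only up to constant and $\lambda^\epsilon$ losses.
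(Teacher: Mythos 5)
Your proof is correct and is essentially the paper's argument: both locate $k_0$ by monotonicity in $k$ (using $\delta\lambda>1$), noting that the defining inequality fails at $k=0$ because $2^j<K\delta^{-1}$ and holds at $k=d-1$ because $2^j>1$ and $\delta>\lambda^{-\frac{d-1}{d+1}}$ give $(\delta\lambda)^{(d-1)/2}\delta\,2^j>1$. Your observation about matching the constant $K$ at the upper endpoint is fair — the paper's proof simply writes $(\delta\lambda)^{-(d-1)/2}\delta^{-1}<1$ and silently absorbs $K$, exactly the harmless adjustment of the threshold you describe.
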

\begin{proof}
Let \(1< 2^j<K\delta^{-1}\), then
\[
K(\delta \lambda)^{-k/2} \delta^{-1}
<2^j
\leq K(\delta \lambda)^{(1-k)/2} \delta^{-1} 
\iff
k_0=k,
\]
and so it suffices to observe that as $\delta> \lambda^{-\frac{d-1}{d+1}}$ we have \((\delta \lambda)^{-(d-1)/2} \delta^{-1}< 1\). 
\end{proof}

\begin{proof}[Proof of Theorem~\ref{thm:main}]
	Throughout the proof, \(k_0\) will be as in Lemma~\ref{lem:optimalk}.
\textcolor{black}{We begin by bounding $P^0_{\lambda,\delta}$, by interpolating between}
\begin{align*}
\| P^0_{\lambda,\delta} \|_{L^2 \to L^{p_{ST}}} \lesssim \lambda^{\frac{d-1}{2(d+1)}} \delta^{1/2},
\end{align*}
which is a consequence of Proposition~\ref{propj}, and
\begin{align*}
\| P^0_{\lambda,\delta} \|_{L^2 \to L^\infty} \lesssim 
{\bigg\lvert\bigcup_{\theta \in \mathcal{C}_0} \Z^d\cap \theta\bigg\rvert^{1/2}}
\leq
\sqrt{|\mathcal{C}|(\sqrt{\lambda \delta})^{d-1} \delta }
\lesssim
(\lambda^{d-1} \delta)^{1/2}.
\end{align*}
Interpolating between these two estimates gives
$$
\| P^0_{\lambda,\delta} \|_{L^2 \to L^p} \lesssim \lambda^{\sigma(p)/2} \delta^{1/2}.
$$

If \(2^j\geq K\delta^{-1}\) then we can assume \(\mathcal{C}_j=\emptyset\) by \eqref{eqn:C-max}, by increasing the size of the constant \(K\) if neccessary. It follows that \(P^j_{\lambda,\delta}=0\) for such \(j\).

Next let \(1< 2^j<K\delta^{-1}\).  Now on the one hand, Proposition~\ref{propj} gives
$$
\| P^j_{\lambda,\delta} \|_{L^2 \to L^{p_{ST}}} \lesssim \lambda^{\frac{1}{p_{ST}}} \delta^{1/2} 2^{\frac{j}{d+1}}.
$$
\textcolor{black}{On the other hand}, bounding the number of points in $\cup_{\mathcal{C}_j} \theta$ through  Lemma~\ref{lem:optimalk} and Theorem~\ref{thmcaps} gives
$$
\| P^j_{\lambda,\delta}\|_{L^2 \to L^\infty} \lesssim \left( \# \mathcal{C}_j (\lambda \delta)^{\frac{d-1}{2}} \delta 2^j \right)^{1/2} \lesssim \left( (2^{j/k_0} \delta)^{-d} (\lambda \delta)^{\frac{d-1}{2}} \delta 2^j \right)^{1/2}.
$$
Interpolating between the last two bounds 	gives
\begin{align*}
\| P^j_{\lambda,\delta}\|_{L^2 \to L^p}
&\lesssim
 \lambda^{\frac{d-1}{4} - \frac{d-1}{2p}} \delta^{\frac{d+1}{4} - \frac{d+1}{2p}} 2^{\frac{j}{2} - \frac{j}{p}} (2^{j/k_0} \delta)^{-\frac{d}{2} + \frac{d(d+1)}{d-1} \frac{1}{p}}
 \\
 &=
 (\lambda\delta)^{\frac{d}{4}(1- \frac{2}{p})}(\lambda/\delta)^{-\frac{1}{4}(1- \frac{2}{p})}
 2^{\frac{j}{2}(1-  \frac{2}{p})} (2^{j/k_0} \delta)^{-\frac{d}{2}(1- \frac{p_{ST}}{p})}.
\end{align*}
\textcolor{black}{From this point on it will simplify matters to write \(\alpha(p)=1-\frac{2}{p},\beta(p)=1-\frac{p_{ST}}{p}\) as in \eqref{eqn:excess-of-p}.}
On
 summing over $j$, we obtain
 	\begin{equation}\label{eqn:main_first_version}
 		\| P_{\lambda,\delta} \|_{L^2 \to L^p} \lesssim
 		 \lambda^{\sigma(p)/2} \delta^{1/2} +
 		 (\lambda\delta)^{\frac{d}{4}\alpha({p})}(\lambda/\delta)^{-\frac{1}{4}\alpha({p})}
 		 \sum_{1< 2^j < K\delta^{-1}}
 		 2^{\frac{j}{2}\alpha({p})} (2^{j/k_0}\delta )^{-\frac{d}{2}\beta({p})}.
 \end{equation}\textcolor{black}{
 Recall that \(k_0\) is minimal such that \((\delta \lambda)^{-k/2} \delta^{-1}
 <2^j \), and that \(1\leq k_0\leq d-1\) by Lemma~\ref{lem:optimalk}. Hence
\begin{align*}
\sum_{1< 2^j < K\delta^{-1}}
2^{\frac{j}{2}\alpha({p})} (2^{j/k_0}\delta )^{-\frac{d}{2}\beta({p})}
%
&\leq
\sum_{1\leq k\leq d-1}
\sum_{\substack{j\in\N \\ (\delta \lambda)^{-k/2} \delta^{-1}
	<2^j \\ (\delta \lambda)^{(1-k)/2} \delta^{-1} \geq 2^j
\\
1< 2^j < K\delta^{-1}
}}
 2^{\frac{j}{2}\alpha({p})} (2^{j/k} \delta)^{-\frac{d}{2}\beta({p})}
 \\
& \lesssim_\epsilon 
 \sum_{1\leq k\leq d-1}\lambda^\epsilon
 \sum_{\substack{j \in\R \\ 2^j=\max\{1,(\delta \lambda)^{-\ell/2} \delta^{-1}\}
 		\\\ell\in\{k-1,k\}}}
 2^{\frac{j}{2}\alpha({p})} (2^{j/k} \delta)^{-\frac{d}{2}\beta({p})},
 \intertext{and exchanging order of summation this is}
 & =
 \lambda^\epsilon
 \sum_{\substack{j \in\R \\ 2^j=\max\{1,(\delta \lambda)^{-\ell/2} \delta^{-1}\}
 		\\0\leq \ell\leq d-1}}
 	\sum_{\substack{ k\in\{\ell,\ell+1\} \\ 1\leq k\leq d-1}}
 2^{\frac{j}{2}\alpha({p})} (2^{j/k} \delta)^{-\frac{d}{2}\beta({p})}.
%
%
\end{align*}
On recalling that \(p\geq p_{ST}\) and so \(\beta(p)\geq 0\), we see  that when \(\ell\leq d-2\), we can obtain an upper bound for the inner sum in the last line above by substituting \(k= \ell+1\). Thus
\begin{align*}
	\sum_{1< 2^j < K\delta^{-1}}
	2^{\frac{j}{2}\alpha({p})} (2^{j/k_0}\delta )^{-\frac{d}{2}\beta({p})}
&\lesssim_{\epsilon}
\lambda^\epsilon
\sum_{\substack{j \in\R \\ 2^j=\max\{1,(\delta \lambda)^{-\ell/2} \delta^{-1}\}	\\0\leq \ell\leq d-2}}
2^{\frac{j}{2}\alpha({p})} (2^{\frac{j}{\ell+1}} \delta)^{-\frac{d}{2}\beta({p})}
\\
&\hphantom{{}\leq{}}+
\lambda^\epsilon
\max\left\{1,(\delta \lambda)^{-(d-1)/2} \delta^{-1}\right\}^{\frac{1}{2}\alpha({p})-\frac{d}{2(d-1)}\beta({p})}  \delta^{-\frac{d}{2}\beta({p})}
%
\\
&=
\lambda^\epsilon
\sum_{\substack{ 1\leq k\leq d-1 \\ (\delta \lambda)^{(k-1)/2} \delta<1 }}
\left((\delta \lambda)^{-(k-1)/2} \delta^{-1}\right)^{\frac{1}{2}\alpha({p})-\frac{d}{2k}\beta({p})}  \delta^{-\frac{d}{2}\beta({p})}
+
\lambda^\epsilon \delta^{-\frac{d}{2}\beta({p})},
\end{align*}
where to deal with the maximum we use the assumption \(\delta>\lambda^{-\frac{d-1}{d+1}}\) from the theorem.Upon writing \((\delta \lambda)^{-(k-1)/2} \delta^{-1} = (\delta\lambda)^{-k/2}  (\lambda/\delta)^{1/2}\), it now follows by  \eqref{eqn:main_first_version} that
\begin{multline*}
\| P_{\lambda,\delta} \|_{L^2 \to L^p} 
 \lesssim
 \lambda^{\sigma(p)/2} \delta^{1/2} + 
 \\
\lambda^\epsilon
(\lambda\delta)^{\frac{d}{4}\alpha({p})}(\lambda/\delta)^{-\frac{1}{4}\alpha({p})}
\sum_{\substack{ 1\leq k\leq d-1 \\ (\delta \lambda)^{(k-1)/2} \delta<1 }}
(\delta\lambda)^{-\frac{k}{2}\big(\frac{1}{2}\alpha({p})-\frac{d}{2k}\beta({p})\big)-\frac{d}{4}\beta(p)}
(\lambda/\delta)^{\frac{1}{2}\big(\frac{1}{2}\alpha({p})-\frac{d}{2k}\beta({p})\big)+\frac{d}{4}\beta(p)}
%
\\
+
\lambda^\epsilon
(\lambda\delta)^{\frac{d}{4}\alpha({p})}(\lambda/\delta)^{-\frac{1}{4}\alpha({p})} \delta^{-\frac{d}{2}\beta({p})}.
\end{multline*}
This proves \eqref{eqn:full_bound}, and we now proceed to deduce the last part of the theorem. If \(A,B>0\) then \[\exp(-kA-B/k)\leq \exp(-2\sqrt{AB}),\] and  moreover 
\[
\sum_{\substack{ 1\leq k\leq d-1 \\ k < k_1 }}
\exp(-kA-B/k)\lesssim
	\exp(-k_1A-B/k_1) \qquad\text{if }
	k_1\leq \sqrt{B/A}.
\]
We apply this with
\begin{align*}
4A&=\alpha(p)\log (\delta \lambda)
,&
4B&=d\beta(p)\log ( \lambda/\delta)
&
k_1 &= \log (\lambda/\delta) /\log (\lambda\delta),
\end{align*}
so that \(
(\delta \lambda)^{(k_1-1)/2} \delta=1\).
Noting that
\[
\exp(
-k_1A-B/k_1)
=
\exp\Big(
-
\tfrac{1}{4}\alpha(p)\log(\lambda/\delta)
-
\tfrac{d}{4}\beta(p) \log (\delta \lambda)\Big)
=
\delta^{-\frac{d}{2}\beta(p)}(\lambda/\delta)^{-\frac{d}{4} \beta(p)-\frac{1}{4}\alpha(p) }
,
\]
we deduce from \eqref{eqn:full_bound} that
\begin{multline*}
	\| P_{\lambda,\delta} \|_{L^2 \to L^p}
	\lesssim \lambda^{\sigma(p)/2} \delta^{1/2} 
	+
	\lambda^\epsilon
	(\lambda\delta)^{\frac{d}{4}\alpha({p})}(\lambda/\delta)^{-\frac{1}{4}\alpha({p})} \delta^{-\frac{d}{2}\beta({p})}
	\\
	+ 
	\lambda^\epsilon
	(\lambda\delta)^{\frac{d}{4}\alpha(p)}
	( \lambda/\delta)^{\frac{d}{4}\beta(p)}
	e^{-\frac{1}{2}\sqrt{d\alpha(p)
			\beta(p)\log (\delta \lambda)\log ( \lambda/\delta)}},
\end{multline*}
where the last term is omitted if \(k_1 <\sqrt{B/A}\).
This is the remaining bound in Theorem~\ref{thm:main}.
}
\end{proof}

\subsection{From Theorem~\ref{thm:main} to Theorems~\ref{thmsimple} and~\ref{thmd3}}

\begin{proof}[Proof of Theorem~\ref{thmsimple}]
	\textcolor{black}{We use throughout the proof the notation  \(\alpha(p)=1-\frac{2}{p},\beta(p)=1-\frac{p_{ST}}{p}\) from \eqref{eqn:excess-of-p}. As in theorem we assume  \(\delta > 
	\lambda^{\frac{\alpha(p)-d\beta(p)}{\alpha(p)+d\beta(p)}}\). We claim that \(\delta\geq \lambda^{-\frac{d-1}{d+1}}\), that is to say
	\[
	\frac{1-\beta(p)\alpha(p)^{-1}d}{1-\beta(p)\alpha(p)^{-1}d}>
	\frac{1-d}{1+d},
	\]
	which is true since \(\beta(p)\alpha(p)^{-1} < 1\) and \(\frac{1-dx}{1+dx}\) is an increasing function of \(x\).}
	
The last bound in Theorem~\ref{thm:main} will now agree with Conjecture~\ref{conj} if
\textcolor{black}{
\begin{align}
(\lambda\delta)^{\frac{d}{4}\alpha({p})}(\lambda/\delta)^{-\frac{1}{4}\alpha({p})} \delta^{-\frac{d}{2}\beta({p})}
&\leq\lambda^{\frac{d-1}{2}-\frac{d}{p}} \delta^{1/2}. 
\label{eqn:easy-case}
\end{align}
Grouping all the terms with a \(1/p\) and without a \(1/p\) in their exponent, the bound \eqref{eqn:easy-case} is exactly
\[
\lambda^{
\frac{d+1}{2p}
}
\delta^{\frac{(d+1)^2}{2(d-1)p}}
\leq 
\lambda^{\frac{d-1}{4}}
\delta^{\frac{d+1}{4}},
\]
which is to say \((\lambda \delta^{\frac{d+1}{d-1}})^{-\frac{d-1}{4}\beta(p)}\leq 1 \), and this is true since \(\delta\geq \lambda^{-\frac{d-1}{d+1}}\) and \(p>p_{ST}\).}
\end{proof}

\begin{proof}[Proof of Theorem~\ref{thmd3}]
	\textcolor{black}{As in the last proof, we use throughout the notation  \(\alpha(p)=1-\frac{2}{p},\beta(p)=1-\frac{p_{ST}}{p}\) from \eqref{eqn:excess-of-p}, noting that since \(d=3\) we have \(p_{ST}=4\) and \(\beta(p)=1-\frac{4}{p}\).}
	If \(d=3\)  the first bound in Theorem~\ref{thm:main} agrees with the conjecture when
\begin{multline*}
	(\lambda\delta)^{\frac{3}{4}\alpha(p)}
	( \lambda/\delta)^{\frac{3}{4}\beta(p)}
	(\delta \lambda)^{-\frac{1}{4}\alpha(p)}
	( \lambda/\delta)^{-\frac{3}{4}\beta(p)}
	\\
	+
	(\lambda\delta)^{\frac{3}{4}\alpha(p)}
	( \lambda/\delta)^{\frac{3}{4}\beta(p)}
	(\delta \lambda)^{-\frac{2}{4}\alpha(p)}
	( \lambda/\delta)^{-\frac{3}{8}\beta(p)}
	\leq
	 (\lambda\delta)^{\frac{3}{4}\alpha(p)}
	 (\lambda/\delta)^{\frac{1}{4}-\frac{3}{2p}} +(\lambda\delta)^{\frac{1}{2}\alpha(p)},
\end{multline*}
that is
\begin{equation*}
(\lambda\delta)^{\alpha(p)}
+
(\lambda\delta)^{\frac{1}{4}\alpha(p)}
( \lambda/\delta)^{\frac{3}{8}\beta(p)}
\leq
(\lambda\delta)^{\frac{3}{4}\alpha(p)}
(\lambda/\delta)^{\frac{1}{4}-\frac{3}{2p}} +(\lambda\delta)^{\frac{1}{2}\alpha(p)}.
\end{equation*}
The first term on the left-hand side is clearly bounded by the last term. The second term on the left-hand side is bounded by the right-hand side if
\begin{equation*}
( \lambda/\delta)^{\frac{1}{8}}
\leq 
(\lambda\delta)^{\frac{1}{2}\alpha(p)}
\qquad\text{or}\qquad
( \lambda/\delta)^{\frac{3}{8}\beta(p)}
\leq
(\lambda\delta)^{\frac{1}{4}\alpha(p)}.
\end{equation*}
This is 
\(\delta \geq \min\{\lambda^{-\frac{3p-8}{5p-8}}\, \lambda^{-\frac{8-p}{5p-16}}\}\), and recalling our standing assumption \(\delta\geq \lambda^{-(d-1)/(d+1)}\) from Theorem~\ref{thm:main} this gives the result.
\end{proof}

\appendix

\section{The Euclidean case}

We prove here the estimate~\eqref{swallow} and show its optimality; the arguments are classical and elementary, but we give them here for ease of reference.

First notice the scaling relation $\| P_{\lambda,\delta} \|_{L^2 \to L^p} = \lambda^{\frac{d}{2} - \frac{d}{p}} \| P_{1,\delta/\lambda} \|_{L^2 \to L^p}$, which reduces matters to $\lambda = 1$: it suffices to prove that
$$
\| P_{1,\delta} \|_{L^2 \to L^p} \lesssim
\left\{
\begin{array}{ll}
\delta^{1/2} & \mbox{if $p \geq p_{ST}$} \\
\delta^{\frac{(d+1)}{2}\left( \frac{1}{2} - \frac{1}{p} \right)} & \mbox{if $2 \leq p \leq p_{ST}$},
\end{array}.
\right.
$$
This is achieved by interpolating between the following points:
\begin{itemize}
\item $p=2$, which is trivial by Plancherel's theorem.
\item $p=\infty$, which follows from the Hausdorff-Young and Cauchy-Schwarz inequality, as well as Plancherel's theorem:
$$
\| P_{1, \delta} f \|_{L^\infty} \lesssim \left\| \chi \left( \frac{|\xi| - 1}{\delta} \right) \widehat{f} \right\|_{L^1} \lesssim \left\|  \chi \left( \frac{|\xi| - 1}{\delta} \right)  \right\|_{L^2} \| \widehat f \|_{L^2} \lesssim \delta^{1/2} \| f \|_{L^2}.
$$
\item $p = p_{ST}$, for which we will use the formula
$$
P_{1,\delta} f = \int_0^\infty  \chi \left( \frac{r - 1}{\delta} \right) \int_{\mathbb{R}^d} \widehat{f}(\xi) e^{ix\cdot \xi}\, d\sigma_r(\xi) \,dr,
$$
where $d\sigma_r$ is the surface measure on the sphere $S_r$ with center at the origin and radius $r$. We can then apply successively the Minkowski inequality, the Stein-Tomas theorem~\cite{SteinTomas,Stein}, the Cauchy-Schwarz inequality and the Plancherel theorem to obtain
\begin{align*}
\| P_{1, \delta} f \|_{L^{p_{ST}}(\mathbb{R}^d)} & \leq  \int_0^\infty  \chi \left( \frac{r - 1}{\delta} \right)  \left\|  \int \widehat{f}(\xi) e^{ix\cdot \xi} \, d\sigma_r(\xi) \right\|_{L^{p_{ST}}(\mathbb{R}^d)} \,dr \\
& \leq  \int_0^\infty  \chi \left( \frac{r - 1}{\delta} \right) \| \widehat{f} \|_{L^2(S_r)}\,dr \lesssim \delta^{1/2} \|f\|_{L^2(\mathbb{R}^d)}.
\end{align*}

\end{itemize}
Finally, there remains to check optimality. It follows from two examples:
\begin{itemize}
\item The Knapp example is a function $\widehat{f}$ which is a cutoff function adapted to a rectangular box of size $\sim \delta$ in one direction, and $\sim \delta^{1/2}$ in $d-1$ directions; this box is furthermore chosen to be contained in $B(0,r+\delta) \setminus B(0,r-\delta)$. Such a function is easily seen to achieve
$$
\frac{\| P_{1,\delta} f \|_{L^p}}{\| f \|_{L^2}} \sim \delta^{\frac{(d+1)}{2}\left( \frac{1}{2} - \frac{1}{p} \right)}
$$
\item The radial example is $\widehat{g}(\xi) = \chi \left( \frac{|\xi| - \lambda}{\delta} \right) $. Using the fact that the Fourier transform of the surface measure on the unit sphere decays like $|\xi|^{-\frac{d-1}{2}}$ as $|\xi| \to \infty$, one can check that
$$
\frac{\| P_{1,\delta} f \|_{L^p}}{\| f \|_{L^2}} \sim \delta^{1/2} \qquad \mbox{for $p>\frac{2d}{d-1}$}.
$$
\end{itemize}

\bibliographystyle{abbrv}
\bibliography{references}

\begin{thebibliography}{10}

\bibitem{BS}
M.~D. Blair and C.~D. Sogge.
\newblock Logarithmic improvements in {$L^p$} bounds for eigenfunctions at the
  critical exponent in the presence of nonpositive curvature.
\newblock {\em Invent. Math.}, 217(2):703--748, 2019.

\bibitem{Bourgain}
J.~Bourgain.
\newblock Eigenfunction bounds for the {L}aplacian on the {$n$}-torus.
\newblock {\em Internat. Math. Res. Notices}, (3):61--66, 1993.

\bibitem{Bourgain2}
J.~Bourgain.
\newblock Moment inequalities for trigonometric polynomials with spectrum in
  curved hypersurfaces.
\newblock {\em Israel J. Math.}, 193(1):441--458, 2013.

\bibitem{BBZ}
J.~Bourgain, N.~Burq, and M.~Zworski.
\newblock Control for {S}chr\"{o}dinger operators on 2-tori: rough potentials.
\newblock {\em J. Eur. Math. Soc. (JEMS)}, 15(5):1597--1628, 2013.

\bibitem{BourgainDemeter1}
J.~Bourgain and C.~Demeter.
\newblock Improved estimates for the discrete {F}ourier restriction to the
  higher dimensional sphere.
\newblock {\em Illinois J. Math.}, 57(1):213--227, 2013.

\bibitem{BourgainDemeter2}
J.~Bourgain and C.~Demeter.
\newblock New bounds for the discrete {F}ourier restriction to the sphere in
  4{D} and 5{D}.
\newblock {\em Int. Math. Res. Not. IMRN}, (11):3150--3184, 2015.

\bibitem{BourgainDemeter3}
J.~Bourgain and C.~Demeter.
\newblock The proof of the {$l^2$} decoupling conjecture.
\newblock {\em Ann. of Math. (2)}, 182(1):351--389, 2015.

\bibitem{BSSY}
J.~Bourgain, P.~Shao, C.~D. Sogge, and X.~Yao.
\newblock On {$L^p$}-resolvent estimates and the density of eigenvalues for
  compact {R}iemannian manifolds.
\newblock {\em Comm. Math. Phys.}, 333(3):1483--1527, 2015.

\bibitem{casselsIntroduction}
J.~W.~S. Cassels.
\newblock {\em An introduction to the geometry of numbers}.
\newblock Die Grundlehren der mathematischen Wissenschaften in
  Einzeldarstellungen mit besonderer Ber\"ucksichtigung der Anwendungsgebiete,
  Bd. 99 Springer-Verlag, Berlin-G\"ottingen-Heidelberg, 1959.

\bibitem{CCU}
F.~Chamizo, E.~Crist\'{o}bal, and A.~Ubis.
\newblock Lattice points in rational ellipsoids.
\newblock {\em J. Math. Anal. Appl.}, 350(1):283--289, 2009.

\bibitem{Cuenin}
J.-C. Cuenin.
\newblock From spectral cluster to uniform resolvent estimates on compact
  manifolds.
\newblock {\em arXiv preprint}, (2011.07254).

\bibitem{DKS}
D.~Dos Santos~Ferreira, C.~E. Kenig, and M.~Salo.
\newblock On {$L^p$} resolvent estimates for {L}aplace-{B}eltrami operators on
  compact manifolds.
\newblock {\em Forum Math.}, 26(3):815--849, 2014.

\bibitem{GL}
P.~Germain and T.~Leger.
\newblock Spectral projectors, resolvent, and fourier restriction on the
  hyperbolic space.
\newblock {\em arXiv preprint}, (2104.04126).

\bibitem{Goetze}
F.~G\"{o}tze.
\newblock Lattice point problems and values of quadratic forms.
\newblock {\em Invent. Math.}, 157(1):195--226, 2004.

\bibitem{Guo2012}
J.~Guo.
\newblock On lattice points in large convex bodies.
\newblock {\em Acta Arith.}, 151(1):83--108, 2012.

\bibitem{Hickman}
J.~Hickman.
\newblock Uniform $l^p$ resolvent estimates on the torus.
\newblock {\em arXiv preprint}, (1907.08131).

\bibitem{Huxley03}
M.~N. Huxley.
\newblock Exponential sums and lattice points. {III}.
\newblock {\em Proc. London Math. Soc. (3)}, 87(3):591--609, 2003.

\bibitem{KRS}
C.~E. Kenig, A.~Ruiz, and C.~D. Sogge.
\newblock Uniform {S}obolev inequalities and unique continuation for second
  order constant coefficient differential operators.
\newblock {\em Duke Math. J.}, 55(2):329--347, 1987.

\bibitem{Kraetzel00}
E.~Kr\"{a}tzel.
\newblock {\em Analytische {F}unktionen in der {Z}ahlentheorie}, volume 139 of
  {\em Teubner-Texte zur Mathematik [Teubner Texts in Mathematics]}.
\newblock B. G. Teubner, Stuttgart, 2000.

\bibitem{Landau15}
E.~Landau.
\newblock Zur analytischen zahlentheorie der definiten quadratischen formen
  (ueber die gitterpunkte in einem mehrdimensionalen ellipsoid.

\bibitem{Nowak2014}
W.~G. Nowak.
\newblock {\em Integer Points in Large Bodies}, pages 583--599.
\newblock Springer International Publishing, Cham, 2014.

\bibitem{Shen}
Z.~Shen.
\newblock On absolute continuity of the periodic {S}chr\"{o}dinger operators.
\newblock {\em Internat. Math. Res. Notices}, (1):1--31, 2001.

\bibitem{Sogge}
C.~D. Sogge.
\newblock {\em Fourier integrals in classical analysis}, volume 105 of {\em
  Cambridge Tracts in Mathematics}.
\newblock Cambridge University Press, Cambridge, 1993.

\bibitem{STZ}
C.~D. Sogge, J.~A. Toth, and S.~Zelditch.
\newblock About the blowup of quasimodes on {R}iemannian manifolds.
\newblock {\em J. Geom. Anal.}, 21(1):150--173, 2011.

\bibitem{Stein}
E.~M. Stein.
\newblock {\em Harmonic analysis: real-variable methods, orthogonality, and
  oscillatory integrals}, volume~43 of {\em Princeton Mathematical Series}.
\newblock Princeton University Press, Princeton, NJ, 1993.
\newblock With the assistance of Timothy S. Murphy, Monographs in Harmonic
  Analysis, III.

\bibitem{Tao}
T.~Tao.
\newblock 247b, notes 2: decoupling theory.
\newblock {\em
  https://terrytao.wordpress.com/2020/04/13/247b-notes-2-decoupling-theory/}.

\bibitem{SteinTomas}
P.~A. Tomas.
\newblock A restriction theorem for the {F}ourier transform.
\newblock {\em Bull. Amer. Math. Soc.}, 81:477--478, 1975.

\bibitem{walfisz1960}
A.~Walfisz.
\newblock \"{U}ber {G}itterpunkte in vierdimensionalen {E}llipsoiden.
\newblock {\em Math. Z.}, 72:259--278, 1959/60.

\end{thebibliography}

\end{document}